\documentclass[a4paper,12pt]{article}

\usepackage{amsmath,amssymb,amsfonts,a4wide}
\usepackage{amsthm}                   
\usepackage{graphics}                 
\usepackage{color}                    
\usepackage{hyperref}                 
\usepackage{epsfig}
\usepackage{graphicx}
\usepackage{psfrag}
\usepackage{fancyhdr}
\usepackage{setspace}
\usepackage{subfigure}
\newtheorem{theorem}{Theorem}

\newtheorem{assumption}[theorem]{Assumption}
\usepackage{color}
\usepackage{algorithm}

\newtheorem{definition}[theorem]{Definition}

{ 

\newtheorem{example}[theorem]{Example}
}

\newcommand{\bi}{\begin{itemize}}
\newcommand{\ei}{\end{itemize}}
\newcommand{\bd}{\begin{displaymath}}
\newcommand{\ed}{\end{displaymath}}
\newcommand{\be}{\begin{eqnarray*}}
\newcommand{\ee}{\end{eqnarray*}}

\setlength{\parindent}{0em}
\title{\LARGE \bf
On Information Transfer in Control Dynamical Systems
}
\author{Subhrajit Sinha and  Umesh Vaidya\\
\thanks{Financial support from the National Science Foundation grant  ECCS-1150405 and CNS-1329915 is gratefully acknowledged. S. Sinha and U. Vaidya is with the Department of Electrical \& Computer Engineering,
Iowa State University, Ames, IA 50011
        {\tt\small ugvaidya@iastate.edu}}%
}


\begin{document}
\maketitle
\begin{abstract}
In this paper, we show through examples, how the existing definitions of information transfer, namely directed information and transfer entropy fail to capture true causal interaction between states in control dynamical system. We propose a new definition of information transfer, based on the ideas from dynamical system theory, and show that this new definition can capture true causal interaction between states. The information transfer measure is generalized to define transfers between the various signals in a control dynamical system and analytical expression for information transfer between state-to-state, input-to-state, state-to-output, and input-to-output are provided for linear systems. There is a natural extension of our proposed definition to define information transfer over $n$ time steps and average information transfer over infinite time step. We show that the average information transfer in feedback control system between plant output and input is equal to the entropy of the open loop dynamics thereby re-deriving the Bode fundamental limitation results using the proposed definition of transfer.
\end{abstract}

\section{Introduction}
From the days of Aristotle, philosophers and scientists have been concerned with the notion of causality. As far back as 300 B.C. Aristotle had realized the importance of causality and even  after about two and half thousand years, there is no universally accepted definition of causality. But causality and influence characterization is a subject of immense importance and finds its application in many different applications like world wide web and social media, biological networks, neural science, economics, finance etc. Usually, concepts of information theory are used in such applications and a study of the information flow between the components of the network throws light on causality and the influential nodes of the network. In \cite{IT_socialmedia} the authors use information based metric to characterize the most influential nodes in social networks. In neuroscience, concepts of information theory are used to understand how information flows in different parts of the brain \cite{IT_brain} and identifying influence in gene regulatory networks \cite{IT_bionetwork1,IT_bionetwork2} . In economic and financial networks, information transfer can be used to infer causal interactions from the time series data \cite{granger_economics, IT_economics,granger_causality}. In control theoretic setting, in \cite{seth2004information}, the authors have used information theoretic ideas to study the classical concepts of control like controllability and observability. 

Causality characterization was initially geared towards time series data and Granger causality \cite{granger_economics},\cite{granger_causality}, directed information \cite{IT_massey_directed},\cite{IT_kramer_directedit} and Schreiber's transfer entropy \cite{IT_schreiber} have been the most popular tools used for inferring the causality structure and influence characterization. In dynamical system setting, it was Liang and Kleeman \cite{liang_kleeman_prl},\cite{liang_predictability} who introduced the concept of information transfer between the states and used it for predictability analysis. 

The formulation of information transfer used in this paper is in dynamical systems setting and is closely related to and inspired from information transfer framework developed in \cite{Liang_Kleeman,liang_kleeman_prl,IT_pnas} for nonlinear dynamical systems. We used the ideas of Liang-Kleeman transfer to propose an axiomatic definition of information transfer in discrete linear dynamic network \cite{cdc_IT}. The axioms were physically motivated and it was shown in \cite{cdc_IT} that there exists a unique expression for information transfer in a dynamic network satisfying these axioms. In \cite{cdc_IT}, we had used absolute entropy to characterize the information flow. However, motivated by the definitions of directed information and transfer entropy, in this work we use conditional entropy instead of absolute entropy to characterize the information flow. 

Motivation for this work lies in the fact that, in dynamical systems, directed information fails to capture the intuitions of information transfer, and thus provide erroneous conclusion about the causal structure in a dynamical system. We show that our definition of information transfer does capture the correct causal structure of the system. We show, how this definition of information transfer between the states of a dynamical system can be easily extended to study the the information transfer between the inputs and outputs in a control dynamical system. In fact, this natural extension leads us to connect the information transfer to the Bode integral of the sensitivity transfer function from the output to the input in a feedback control system. 

The paper is organised as follows. In section \ref{section_DI}, we discuss, through some examples, how directed information fails to capture the notions of causality (zero transfer and indirect influence). In section \ref{section_IT} we provide our definition of information transfer. We also define $n$-step information transfer and average information transfer and for linear systems, we provide explicit formulas for computing the transfer. We also revisit the examples in section \ref{section_DI} and show how our definition of information transfer captures the intuitions of information transfer. In section \ref{section_IT_CDS}, we generalize our definition of information transfer to define information transfer between the inputs and outputs in a linear control dynamical system. We also study the information transfer in a feedback control system and show how this is related to the Bode integral of the sensitivity transfer function from the output to the input. This is followed by conclusions in section \ref{section_conclusion}.

\section{Directed Information as a Measure of Causality}\label{section_DI}
Directed information and transfer entropy are two of the most popular notions of information transfer used to characterize causality. In this section, we show that these notions of information transfer cannot faithfully capture the true causality structure in control dynamical system. 

Directed information was first defined by Massey \cite{IT_massey_directed} as a generalization of Marko's bidirectional information \cite{Marko}. Both bidirectional information and directed information gave a sense of direction to Shannon's information theory and is viewed as a generalized information theory.
Let $X^n=\{X_1,X_2,\cdots , X_n\}$ and $Y^n= \{Y_1,Y_2,\cdots , Y_n\}$ be two stochastic processes, viewed as a sequence of random variables. 
Massey and Kramer \cite{IT_kramer_directedit} defined the directed information from $X^n$ to $Y^n$ as
\begin{eqnarray}\label{directed_IT}
I(X^n\to Y^n) = H(Y^n) - H(Y^n\parallel X^n)
\end{eqnarray}

where $H(Y^n)$ is the entropy of the sequence $Y^n$ and $H(Y^n\parallel X^n) := \sum_{i = 1}^n H(Y_i|Y^{i-1},X^i)$ is the entropy of $Y^n$ \emph{causally} conditioned on $X^n$. 

The directed information is asymmetric and gives a directional sense to the information and defines a measure to determine the direction of information flow. Since the development of the concept of directed information, this has been used in many different applications, like determining the channel capacity of a communication channel. Moreover, for Gaussian variables, directed information and Granger causality \cite{granger_causality}, \cite{granger_economics} are equivalent \cite{costanzo2014survey}. This allows the concept of directed information to be used as a measure of causality and hence it has been used to infer about the causal structure of statistical processes.
In the following, we demonstrate using three different examples how the directed information fails to capture the true causality structure in dynamical systems setting. While the arguments are made in the context of directed information, similar conclusions can be drawn in the context of transfer entropy. We claim that at the heart of the problem is the manner in which causal conditioning is performed in both these definitions of information transfer. 


\subsection{Examples}

\begin{example}\label{example_zero_IT}
Consider the following linear system with output
\begin{eqnarray}\label{example_1_system}
\begin{pmatrix}x_1(t+1) \\ 
x_2(t+1) \\ 
x_3(t+1)
\end{pmatrix} &=& \begin{pmatrix} 0 & 0.5 & 0.5 \\ 
0 & 0 & 0.5 \\ 
0 & 1 & 0
\end{pmatrix} \begin{pmatrix}
x_1(t) \\ 
x_2(t) \\ 
x_3(t)
\end{pmatrix} + \sigma\xi(k)\nonumber\\
\end{eqnarray}

\begin{figure}
\centering
\subfigure[]{\includegraphics[scale=.75]{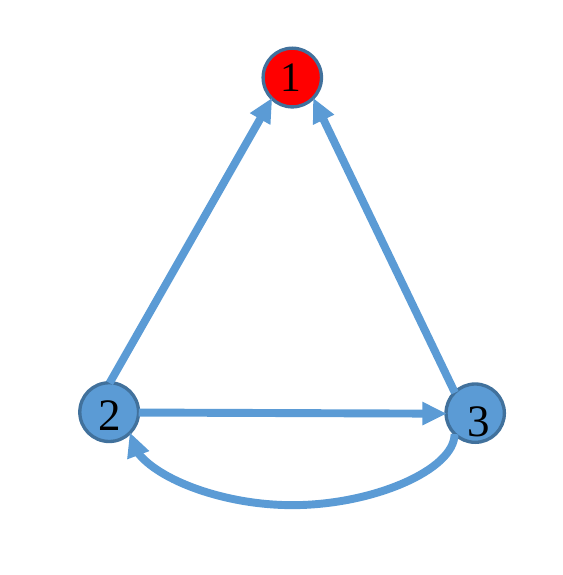}}\label{example_1_fig}
\subfigure[]{\includegraphics[scale=.75]{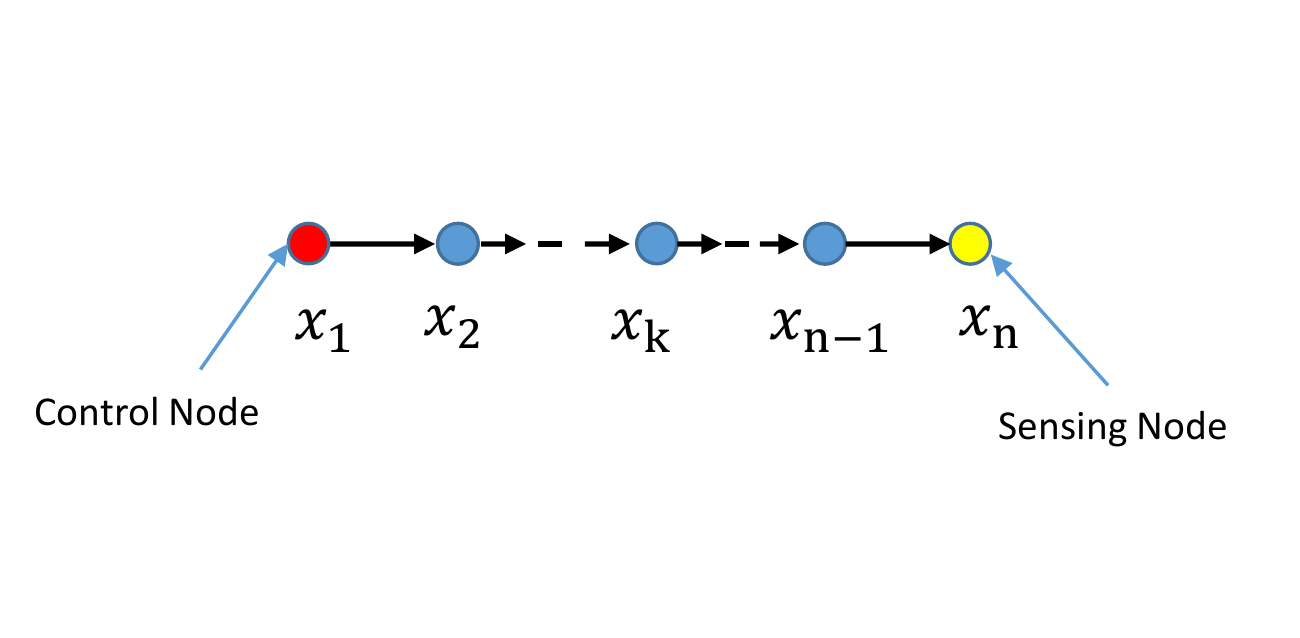}}\label{example_2_fig}
\caption{(a) Graph of dynamical system of example 1. (b) Graph of dynamical system of example 2.}
\end{figure}
where $x_i(t)$ are the states at time step $t$ and $\xi(t)$ is a independent identically distributed (i.i.d.) zero mean unit variance Gaussian noise and $\sigma$ is a constant. We notice that while there is directed path from $x_2\to x_1$ and $x_3\to x_1$ (fig. \ref{example_1_fig}(a)), there is no path from $x_1$ to $x_2$ or from $x_1$ to $x_3$. Hence, we conclude that $x_1$ is not a cause of $x_2$ or $x_3$, that is, $x_1$ is not influencing $x_2$ or $x_3$.  So if we treat the dynamical system as a stochastic process, we expect that the flow of information from $x_1\to x_2$ and $x_1\to x_3$ to be zero, thereby inferring that there is no causal connection from $x_1\to x_2$ and $x_1\to x_3$. In fact closer examination reveals that $x_1$ is not influencing $x_2$ and $x_3$ over any number of time steps. However, as we show below, the directed information fails to capture this true causal interaction between state $x_1,x_2,$ and $x_3$. In particular, we show that $I(x_1^n\to x_2^n)\neq 0$   and $I(x_1^n\to x_3^n)\neq 0$ for any $n$. In Fig. \ref{DI_3_node}, we plot the directed information from $x_1\to x_2$ and $x_1\to x_3$ over different time steps.

\begin{figure}[htp]
\centering
\subfigure[]{\includegraphics[scale=.34]{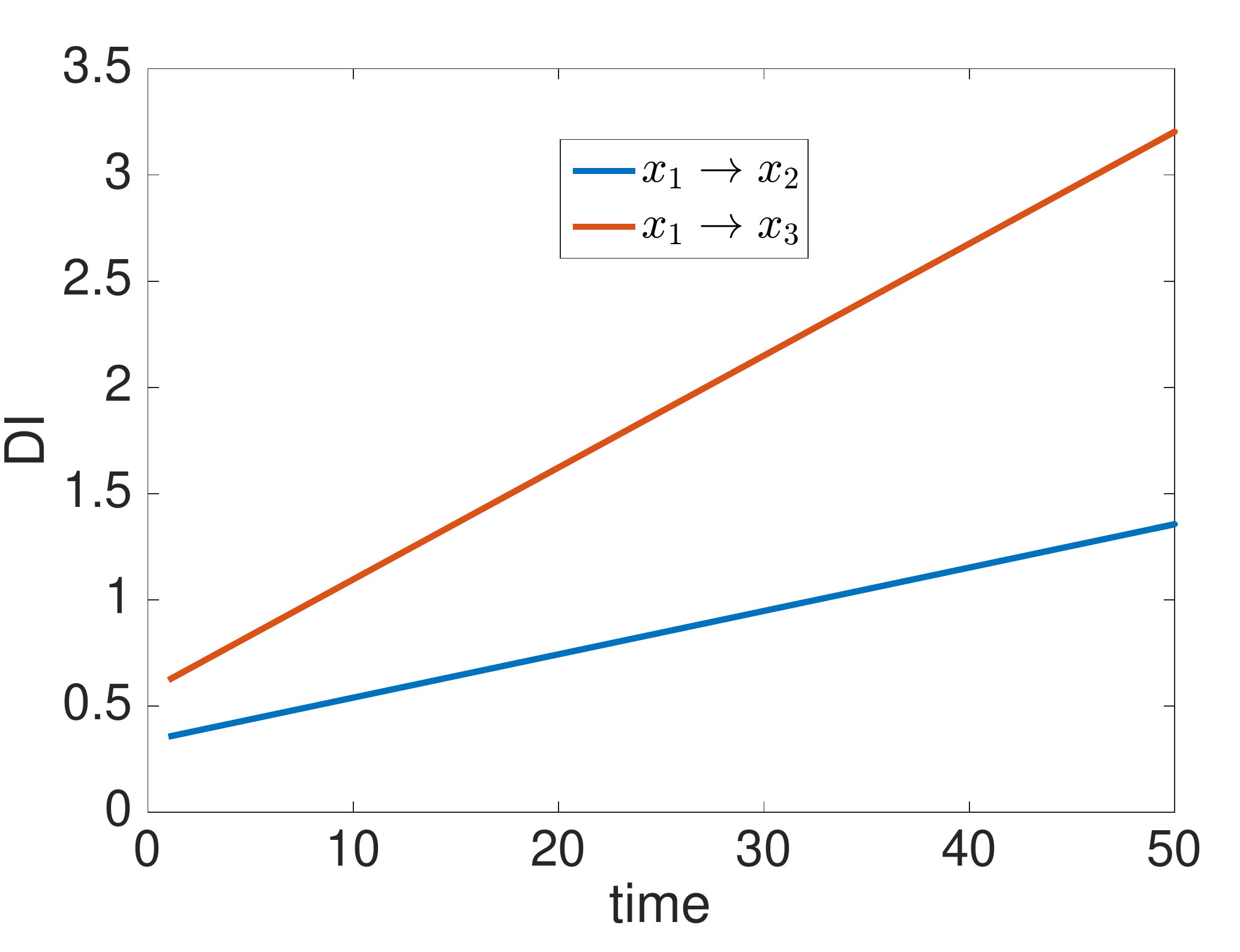}\label{DI_3_node}}
\subfigure[]{\includegraphics[scale=.34]{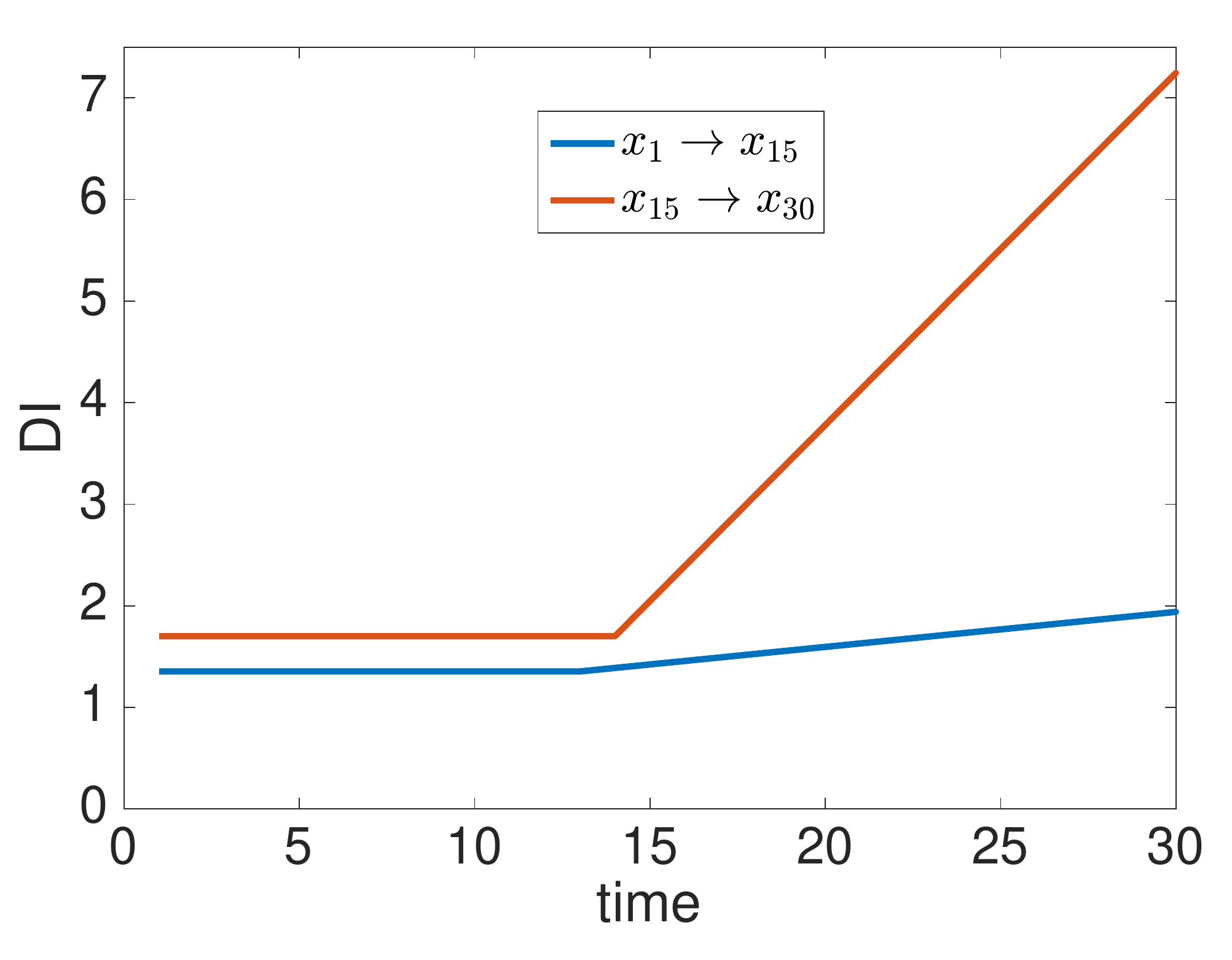}\label{DI_30_node}}
\caption{(a) Directed information plots for example 1. (b) Directed information plots for example 2.}
\end{figure}
\end{example}

\begin{example}\label{example_indirect}

Next we consider a single input single output linear system 
\begin{eqnarray}\nonumber\label{example_2_system}
&& x(t+1) = \begin{pmatrix}
0 & 0 & \hdots & 0 & 0 \\ 
1 & 0 & \hdots & 0 & 0\\
\vdots & \vdots & \vdots & \ddots & \vdots\\
0 & 0 & \hdots & 1 & 0
\end{pmatrix} x(t) + \begin{pmatrix}
1 \\
0\\
\vdots\\
0
\end{pmatrix} u\\
&& y(t) = \begin{pmatrix}0&\hdots& 1\end{pmatrix} x(t)
\end{eqnarray} 
where $x(t)\in \mathbb{R}^N$. We note that the state $x_1$ is directly controlled through $u$ and measurements are made at state $x_N$. State $x_k$, for $1<<k<<N$, is assumed to be some intermittent state. Since the control input $u$ does not affect state $x_k$ instantaneously, but through series of $k$ delays, we expect the information flow from state $x_1$ to $x_k$ to be zero for time step $t=0,\ldots, k-1$. On the other hand since $x_k$ influence $x_N$ again through series of delay we expect the information flow from $x_k$ to $x_n$ to zero for $t=k,\ldots, N-k-1$. However as we show in Fig. \ref{DI_30_node}, directed information from $x_1\to x_k$ and $x_k\to x_N$ is nonzero for  all time $t=0,\ldots,$ conveying that there is instantaneous flow of information from $x_1$ to $x_k$ and from $x_k$ to $x_N$. The simulation results in Fig. \ref{DI_30_node} are obtained for $N=30$ nodes with $k=15$. This example demonstrates that the directed information fails to capture indirect path of influence.

\end{example}

The above examples reveal some serious deficiencies of directed information as a measure of causality in dynamical system setting. This suggest that the directed information is a measure of {\it statistical} interaction between two signals but it does not successfully capture the true {\it dynamical} interactions between dynamical states.  
We expect that any meaningful definition of information transfer in network dynamical system be able to capture true causal interaction and also obey time constraint of information flow. Our proposed definition of information transfer discussed in the following section precisely does that.

\section{A New Definition of Information Transfer}\label{section_IT}
Consider the following discrete time dynamical system,
\begin{eqnarray}
z(t+1)=f(z(t))+\xi(t)\label{system}
\end{eqnarray}
where $z(t)\in \mathbb{R}^N$, $\xi(t)\in \mathbb{R}^N$ is assumed to vector valued random variable and $\xi(0),\xi(1),\ldots$ are independent random vectors each having the same density $g$. The mapping $f: \mathbb{R}^N\to \mathbb{R}^N$ is assumed to be at least continuous.  Let $z=(z_1,\ldots,z_N)^\top\in \mathbb{R}^N$. We are interested in defining the information transfer from state $z_i$ to state $z_j$, as the system evolves from time step $t$ to time step $t+1$. We denote this transfer by the notation $[T_{z_i\to z_j}]_t^{t+1}$. 


We provide the following definition for information transfer from state $z_i$ to state $z_j$ for the dynamical system (\ref{system}) :
\begin{definition}[Information transfer]\label{IT_def} The information transfer from $z_i$ to $z_j$ for dynamical system (\ref{system}) as the system evolves from time $t$ to time $t+1$ and denoted by $[T_{z_i\to z_j}]_t^{t+1}$ is given by following formula
\begin{eqnarray}\label{IT}
[T_{z_i\to z_j}]_t^{t+1}=H(\rho(z_j(t+1)|z_j(t)))-H(\rho_{\not{z}_i}(z_j(t+1)|z_j(t))
\end{eqnarray}
where $H(\rho(z_j))=- \int_{\mathbb{R}^{|z_j|}} \rho(z_j)\log \rho(z_j)dz_j$ is the entropy of probability density function $\rho(z_j)$ and $H(\rho_{\not{z}_i}(z_j(t+1)|z_j(t))$ is the entropy of $z_j(t+1)$, conditioned on $z_j(t)$, where $z_i$ has been frozen. 
\end{definition}
The above definition of information transfer can be understood by rewriting the expression of information transfer as follows:
\begin{eqnarray}
H(\rho(z_j(t+1)|z_j(t)))=[T_{z_i\to z_j}]_t^{t+1}+H(\rho_{\not{z}_i}(z_j(t+1)|z_j(t)))\label{transfer}
\end{eqnarray}
so that the total entropy of $z_j$ is the sum of transfer from $z_i$ and the entropy of $z_j$, when $z_i$ is absent.
%
%

\subsection{n-step Information Transfer}
The information transfer defined in (\ref{IT}) gives the information transferred from $z_i$ to $z_j$ as the dynamical system evolves from time step $t$ to time step $t+1$. So one can look at this definition as a \emph{one}-step transfer. To generalize the one step transfer to $n$-step transfer, we define the $n$-step transfer from $z_i$ to $z_j$ as the system evolves from time step $t$ to $t+n$, where $n\in\mathbb{Z}_{>0}$, as follows :
\begin{definition}[$n$-step Information transfer] The information transferred over $n$ time steps, $n\in\mathbb{Z}_{>0}$, from $z_i$ to $z_j$, which is denoted as $(T_{z_i\to z_j})_t^{t+n}$, as the dynamical system (\ref{system}) evolves from time step $t$ to $t+n$, is defined as
\begin{eqnarray}\label{n_stepIT}\nonumber
&&(T_{z_i\to z_j})_t^{t+n} =  H(\rho(z_j(t+n)|z_j(t+n-1)\cdots z_j(t)))\\
&& \qquad \qquad \qquad - H(\rho_{\not{z}_i}(z_j(t+n)|z_j(t+n-1)\cdots z_j(t)))
\end{eqnarray}
where $H(\rho(z_j(t+n)|z_j(t+n-1)\cdots z_j(t)))$ is the conditional entropy of $z_j(t+n)$, conditioned on past $n$-steps of $z_j$ and $H(\rho_{\not{z}_i}(z_j(t+n)|z_j(t+n-1)\cdots z_j(t)))$ is the conditional entropy of $z_j(t+n)$ on its past $n$-steps and the state $z_i$ has been frozen (held constant) from time step $t$ to time step $t+n$.
\end{definition}

In general, we have the following

\begin{theorem}\label{n_step_IT}
\begin{eqnarray}\label{sum_IT_n_step}
\sum_{i=1}^n(T_{z_i\to z_j})_t^{t+i} = H(z_j(t+n)\cdots z_j(t)) - H_{\not{z}_i}(z_j(t+n)\cdots z_j(t))
\end{eqnarray}
where 
\begin{eqnarray*}
H(z_j(t+n)\cdots z_j(t)) = H(\rho(z_j(t+n)\cdots z_j(t)))
\end{eqnarray*}
and 
\begin{eqnarray*}
H_{\not{z}_i}(z_j(t+n)\cdots z_j(t))=H(\rho_{\not{z}_i}(z_j(t+n)\cdots z_j(t))).
\end{eqnarray*}
\end{theorem}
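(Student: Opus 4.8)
The plan is to expand the left-hand side of (\ref{sum_IT_n_step}) using the definition of the $k$-step transfer and then collapse the two resulting telescoping sums with the chain rule for differential entropy. To keep the frozen-state index $i$ fixed, I write the summation variable as $k$; substituting the definition of $(T_{z_i\to z_j})_t^{t+k}$ gives
\begin{eqnarray*}
\sum_{k=1}^n (T_{z_i\to z_j})_t^{t+k} &=& \sum_{k=1}^n H(\rho(z_j(t+k)|z_j(t+k-1)\cdots z_j(t))) \\
&& \quad - \sum_{k=1}^n H(\rho_{\not{z}_i}(z_j(t+k)|z_j(t+k-1)\cdots z_j(t))).
\end{eqnarray*}

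First I would apply the chain rule for differential entropy to the joint density of $(z_j(t),z_j(t+1),\ldots,z_j(t+n))$: the $k$-th summand $H(\rho(z_j(t+k)|z_j(t+k-1)\cdots z_j(t)))$ equals the increment $H(z_j(t+k)\cdots z_j(t))-H(z_j(t+k-1)\cdots z_j(t))$, so the first sum telescopes to $H(z_j(t+n)\cdots z_j(t))-H(z_j(t))$. Running the identical computation on the modified system in which $z_i$ is held at its time-$t$ value throughout $[t,t+n]$, the second sum telescopes to $H_{\not{z}_i}(z_j(t+n)\cdots z_j(t))-H_{\not{z}_i}(z_j(t))$. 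Finally, because \emph{freezing} $z_i$ only alters the transitions out of times $t,t+1,\ldots$, it does not change the law of the state at time $t$, whence $H_{\not{z}_i}(z_j(t))=H(z_j(t))$; subtracting the two telescoped expressions cancels these marginal terms and leaves exactly (\ref{sum_IT_n_step}).

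I expect the only real subtlety to be the justification that the chain rule is legitimately applicable, i.e. that the conditionals $\rho_{\not{z}_i}(z_j(t+k)|z_j(t+k-1)\cdots z_j(t))$, $k=1,\ldots,n$, are genuine conditionals of one and the same joint density $\rho_{\not{z}_i}(z_j(t),\ldots,z_j(t+n))$. This in turn hinges on reading ``freezing $z_i$'' as replacing the $z_i$-component of $f$ by the constant map \emph{before} the dynamics is run forward — so that one really obtains a Markov evolution with a bona fide joint law — rather than as an a posteriori conditioning, together with the existence of all the relevant push-forward densities, which is guaranteed by the standing assumption in (\ref{system}) that each $\xi(t)$ has density $g$ and that $f$ is continuous. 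Once these densities are in hand, the telescoping and the marginal-cancellation identity $H_{\not{z}_i}(z_j(t))=H(z_j(t))$ complete the proof.
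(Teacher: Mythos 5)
Your proof is correct and is essentially the paper's own argument: both rest on the entropy chain rule $H(A\mid B)=H(A,B)-H(B)$ applied to the nominal and the frozen joint densities, together with the identity $H_{\not{z}_i}(z_j(t))=H(z_j(t))$ at the initial time. The paper merely packages the telescoping as an induction on the number of steps, so the two derivations coincide.
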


\begin{proof}

We prove this by induction. For $i=1$, using the fact that $H(y)=H_{\not{x}}(y)$, we have $(T_{x\to y})_t^{t+1} = H(y',y)-H_{\not{x}}(y',y)$. 

Let this be true for $i=k$. Hence, we have,
\begin{eqnarray*}
\sum_{i=1}^k(T_{x\to y})_t^{t+i} = H(y(t+k)\cdots y(t))- H_{\not{x}}(y(t+k)\cdots y(t))
\end{eqnarray*}
Now,
\begin{eqnarray*}
\begin{aligned}
&(T_{x\to y})_t^{t+k+1} = H(y(t+k+1)|y(t+k)\cdots y(t)) - H_{\not{x}}(y(t+k+1)|y(t+k)\cdots y(t))\\
&= [H(y(t+k+1)\cdots y(t))-H_{\not{x}}(y(t+k+1)\cdots y(t))]\\
& \quad - [H(y(t+k)\cdots y(t))-H_{\not{x}}(y(t+k)\cdots y(t))]\\
&=  [H(y(t+k+1)\cdots y(t))-H_{\not{x}}(y(t+k+1)\cdots y(t))]- \sum_{i=1}^k(T_{x\to y})_t^{t+i} 
\end{aligned}
\end{eqnarray*}
Hence,
\begin{eqnarray*}
\sum_{i=1}^{k+1}(T_{x\to y})_t^{t+i} = H(y(t+k+1)\cdots y(t)) - H_{\not{x}}(y(t+k+1)\cdots y(t))
\end{eqnarray*}
Hence,

\begin{eqnarray*}
\sum_{i=1}^n(T_{x\to y})_t^{t+i} = H(y(t+n)\cdots y(t)) - H_{\not{x}}(y(t+n)\cdots y(t))
\end{eqnarray*}

\end{proof}

After this point, for notational convenience, we will use the notation $\rho(y_t^{t+n})$ to denote the density of the joint distribution of $y(t+n)y(t+n-1)\cdots y(t)$, that is,
$\rho(y_t^{t+n}) = \rho(y(t+n)y(t+n-1)\cdots y(t))$. When $t=0$, we will simply use $\rho(y^n)$ to denote $\rho(y(n)y(n-1)\cdots y(0))$.

\begin{definition}[Average Information Transfer] The average information transfer from $X$ to $Y$ is defined as  
\begin{eqnarray}\label{avg_IT}
\bar{T}_{x\to y}=\lim_{T\to \infty} \frac{1}{T}\sum_{i=0}^T(T_{x\to y})_0^{i}
\end{eqnarray}
\end{definition}

With this, 
\begin{eqnarray}\label{entropy_rate}
\bar{T}_{x\to y} = \lim_{n\to \infty}\frac{1}{n}\left[H(y^n) - H_{\not{x}}(y^n)\right]
\end{eqnarray}
Hence, the average information transfer from $X$ to $Y$ is a difference of two entropy rates, namely, entropy rate of $Y$ $(\lim_{n\to \infty}\frac{1}{n}H(y^n))$ and entropy rate of $Y$ when $X$ is frozen $(\lim_{n\to \infty}\frac{1}{n}H_{\not{x}}(y^n))$.

The average information transfer and average directed information, however, coincide as time goes to infinity. Intuitively, this can be explained as follows. Both average information transfer and average directed information can be written as a difference of two terms, where the first term is the same for both of them. The difference lies in the fact that, our definition of information transfer uses freezing, whereas, in directed information, one uses conditioning. However, as time goes to infinity, infinite step freezing and infinite step conditioning is the same  and hence they converge to the same value.
\begin{figure}[htp]
\centering
\includegraphics[scale=.4]{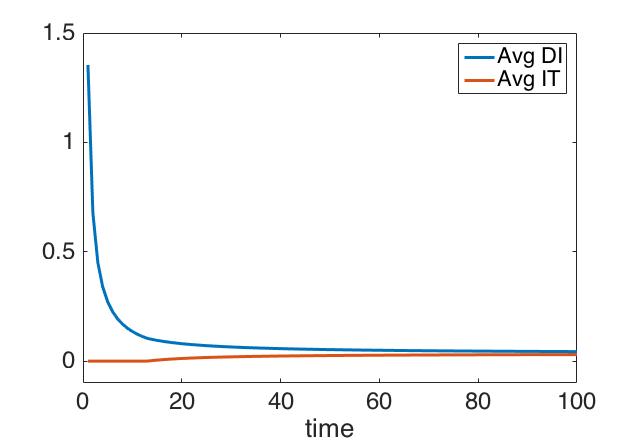}
\caption{Comparison of average information transfer and average directed information for example (\ref{example_indirect}).}\label{avg_IT_avg_DI}
\end{figure}
For example, lets consider the example (\ref{example_indirect}). In figure 3, we plot the average information transfer and average directed information from $x_1$ to $x_{30}$. From the figure, we see that though average information transfer and average directed information differs in the initial stages, as the system evolves, they both converge to the same value.

\subsection{Information transfer in linear dynamical system}
The Eq. (\ref{IT}) provides  formula for information transfer in general nonlinear system however for a special class of linear system analytical expression for the information transfer can be obtained. 
 Consider the following linear time invariant dynamical system 
\begin{eqnarray}
z(t+1)=Az(t)+\sigma \xi(t)\label{lti}
\end{eqnarray}
where $z(t)\in \mathbb{R}^N$ and $\xi(t)$ is vector valued Gaussian random variable with zero mean and unit variance. We assume that the initial conditions are Gaussian with covariance $\Sigma(0)$. Since the system is linear, the distribution of the system state for all future time will remain Gaussian with covariance $\Sigma(t)$ satisfying 
\[A \Sigma(t-1)A^\top+\sigma^2 I=\Sigma(t)\]
To define the information transfer between various subspace we introduce following notation to split the $A$ matrix :
\begin{eqnarray}
z(t+1)=\begin{pmatrix}x^{'}\\y^{'}\end{pmatrix}=\begin{pmatrix}A_x&A_{xy}\\ A_{yx}&A_{y}\end{pmatrix}\begin{pmatrix}x\\y\end{pmatrix}+\sigma \xi\label{splittingxy}
\end{eqnarray}

The $A$ matrix can be further split using the subspace decomposition $x=(x_1^\top,x_2^\top)^\top$ as follows:
\begin{eqnarray}
\begin{pmatrix}A_x&A_{xy}\\ A_{yx}&A_{y}\end{pmatrix}=\begin{pmatrix}A_{x_1}&A_{x_1x_2}& A_{x_1 y}\\A_{x_2x_1}&A_{x_2}& A_{x_2 y}\\ A_{y x_1}&A_{y x_2}& A_{y}\end{pmatrix}\label{splittingA}
\end{eqnarray}
Based on the decomposition of the system $A$ matrix we can also decompose the covariance matrix $\Sigma$ at time instant $t$ as follows. 
\begin{eqnarray}
\Sigma=\begin{pmatrix}\Sigma_x&\Sigma_{xy}\\\Sigma_{xy}^\top& \Sigma_y\end{pmatrix}=\begin{pmatrix} \Sigma_{x_1}&\Sigma_{x_1x_2}&\Sigma_{x_1 y}\\\Sigma_{x_1x_2}^\top&\Sigma_{x_2}&\Sigma_{x_2 y}\\\Sigma_{x_1y}^\top&\Sigma_{x_2y}^\top&\Sigma_{y}\end{pmatrix}\nonumber\\
\label{sigma_dec}
\end{eqnarray}
Using the above notation, we state following theorem providing explicit expression for information transfer in linear dynamical system during transient and steady state.

\begin{theorem}
Consider the linear dynamical system (\ref{lti}) and associated splitting of state space in Eqs. (\ref{splittingxy}) and (\ref{splittingA}). We have following expression for information transfer between various subspace
\begin{eqnarray}
[T_{x\to y}]_t^{t+1}=\frac{1}{2}\log \frac{|A_{yx}\Sigma^s_{y}(t)A_{yx}^\top +\sigma^2 I|}{\sigma^2}
\end{eqnarray}
where $\Sigma^s_y(t)=\Sigma_x(t)-\Sigma_{xy}(t)\Sigma_y(t)^{-1}\Sigma_{xy}(t)^\top$ is the Schur complement of $\Sigma_{y}(t)$ in the matrix $\Sigma(t)$. 

\begin{eqnarray}
[T_{x_1\to y}]_t^{t+1}=\frac{1}{2}\log \frac{|A_{yx}\Sigma^s_y(t)A_{yx}^\top +\sigma^2 I |}{|A_{yx_2}(\Sigma_y^{s})_{yx_2}(t)A_{yx_2}^\top+\sigma^2 I|}\label{transferx1y}
\end{eqnarray}

where $|\cdot|$ is the determinant and $ (\Sigma_y^s)_{yx_2}$ is the Schur complement of $\Sigma_{y}$ in the matrix 
\[\begin{pmatrix}\Sigma_{x_2}&\Sigma_{x_2y}\\\Sigma_{x_2 y}^\top&\Sigma_y\end{pmatrix}\]

\end{theorem}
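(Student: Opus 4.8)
The plan is to reduce the statement to the standard Gaussian identities. Since (\ref{lti}) is linear with Gaussian noise and Gaussian initial condition, at every time $t$ the pair $\big(y(t+1),y(t)\big)$ is jointly Gaussian, so the conditional law $\rho(y(t+1)\mid y(t))$ is Gaussian with covariance equal to the Schur complement of the $y(t)$-block in the joint covariance, and a Gaussian density on $\mathbb{R}^m$ with covariance $C$ has entropy $\frac{1}{2}\log\!\big((2\pi e)^m|C|\big)$ (the means are deterministic and irrelevant here). Hence $[T_{x\to y}]_t^{t+1}$ and $[T_{x_1\to y}]_t^{t+1}$ are each the difference of two such entropies, and since the two conditional densities live on the same $\mathbb{R}^{|y|}$ the prefactor $(2\pi e)^{|y|}$ cancels, leaving $\frac{1}{2}\log$ of a ratio of determinants of conditional covariances.

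First I would compute the conditional covariance for the true dynamics. From (\ref{splittingxy}), $y(t+1)=A_{yx}x(t)+A_yy(t)+\sigma\xi_y(t)$; using independence of $\xi(t)$ from $z(t)$ and the blocks in (\ref{sigma_dec}) one gets $\mathrm{Cov}(y(t+1))=\begin{pmatrix}A_{yx}&A_y\end{pmatrix}\Sigma(t)\begin{pmatrix}A_{yx}^\top\\A_y^\top\end{pmatrix}+\sigma^2 I$, $\mathrm{Cov}(y(t+1),y(t))=A_{yx}\Sigma_{xy}(t)+A_y\Sigma_y(t)$, and $\mathrm{Cov}(y(t))=\Sigma_y(t)$. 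Forming the Schur complement $\mathrm{Cov}(y(t+1))-\mathrm{Cov}(y(t+1),y(t))\,\Sigma_y(t)^{-1}\,\mathrm{Cov}(y(t+1),y(t))^\top$ and expanding, all terms containing $A_y$ cancel, leaving $A_{yx}\big(\Sigma_x(t)-\Sigma_{xy}(t)\Sigma_y(t)^{-1}\Sigma_{xy}(t)^\top\big)A_{yx}^\top+\sigma^2 I=A_{yx}\Sigma_y^s(t)A_{yx}^\top+\sigma^2 I$. Thus $H(\rho(y(t+1)\mid y(t)))=\frac{1}{2}\log\!\big((2\pi e)^{|y|}\,|A_{yx}\Sigma_y^s(t)A_{yx}^\top+\sigma^2 I|\big)$, which is the common numerator in both claimed formulas.

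Next I would repeat the computation for the frozen dynamics. Freezing $x$ from time $t$ onward leaves the law of $z(t)$ (hence $\Sigma(t)$ and all its blocks) untouched but replaces the update of $y$ by $y(t+1)=A_yy(t)+\sigma\xi_y(t)$, since the frozen $A_{yx}x(t)$ is an additive constant that drops out of every covariance. The same Schur-complement algebra now collapses to $\mathrm{Cov}_{\not x}(y(t+1)\mid y(t))=\sigma^2 I$, so $H(\rho_{\not x}(y(t+1)\mid y(t)))=\frac{1}{2}\log\!\big((2\pi e)^{|y|}(\sigma^2)^{|y|}\big)$; subtracting gives the first formula (with the denominator $\sigma^2$ read as $|\sigma^2 I|$). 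For the second formula, freezing only $x_1$ in the splitting (\ref{splittingA}) replaces the update by $y(t+1)=A_{yx_2}x_2(t)+A_yy(t)+\sigma\xi_y(t)$; running the identical argument with the block $\begin{pmatrix}\Sigma_{x_2}&\Sigma_{x_2y}\\\Sigma_{x_2y}^\top&\Sigma_y\end{pmatrix}(t)$ playing the role of $\Sigma(t)$ yields $\mathrm{Cov}_{\not{x_1}}(y(t+1)\mid y(t))=A_{yx_2}(\Sigma_y^s)_{yx_2}(t)A_{yx_2}^\top+\sigma^2 I$, and subtracting its entropy from the common numerator above gives (\ref{transferx1y}).

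The one genuinely delicate step is the cancellation in the Schur complement of $\mathrm{Cov}\big(y(t+1),y(t)\big)$: showing that the $A_y$-cross terms appearing in $\mathrm{Cov}(y(t+1))$ are exactly cancelled by $\mathrm{Cov}(y(t+1),y(t))\Sigma_y(t)^{-1}\mathrm{Cov}(y(t+1),y(t))^\top$, so that the conditional covariance depends on the coupling $A_y$ only through the Schur complement $\Sigma_y^s$ of the state covariance; once that is verified, everything else is bookkeeping. The only conceptual point to pin down is the precise meaning of freezing: it is imposed on the transition from $t$ to $t+1$ only, so $y(t)$ retains its true covariance $\Sigma_y(t)$ — the same fact $H(y)=H_{\not x}(y)$ used in the proof of Theorem \ref{n_step_IT} — and a frozen variable enters the $y$-update only as a deterministic shift, hence disappears from all second-moment computations.
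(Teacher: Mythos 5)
Your proposal is correct and follows essentially the same route as the paper's own proof: write both conditional densities as Gaussians, obtain $\mathrm{Cov}(y(t+1)\mid y(t))=A_{yx}\Sigma_y^s(t)A_{yx}^\top+\sigma^2 I$ via the Schur complement of the joint covariance (with the $A_y$ cross terms cancelling), compute the frozen-$x$ and frozen-$x_1$ conditional covariances as $\sigma^2 I$ and $A_{yx_2}(\Sigma_y^s)_{yx_2}(t)A_{yx_2}^\top+\sigma^2 I$ respectively, and take the log-ratio of determinants. Your explicit remarks on the meaning of freezing (time-$t$ law unchanged, denominator $\sigma^2$ read as $|\sigma^2 I|$) are consistent with, and slightly more careful than, the paper's presentation.
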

\begin{proof}
From the formula of information transfer  we have
\begin{eqnarray*}
[T_{x\to y}]_t^{t+1}=H(\rho(y^{'}|y))-H_{\not{x}}(\rho(y^{'}|y))
\end{eqnarray*}
So to compute the transfer, we need to know $\rho(y^{'}|y)=\frac{\rho(y^{'},y)}{\rho(y)}$ and $\rho_{\not{x}}(y^{'}|y)$. Furthermore, since all the probability density function involved are Gaussian, we use following formula for the entropy of Gaussian density function.
\[H(\rho(w))=\frac{1}{2}\log ((2\pi e)^N |\Sigma(t)|)\]
for $\rho(w)$ of the form
\[\rho(w)=\frac{1}{(2\pi)^{\frac{N}{2}} |\Sigma(t)|^{\frac{1}{2}}}\exp\left(-\frac{1}{2} w^\top \Sigma(t)^{-1}z\right)\]
Hence to compute the required transfer, we only need to know the covariance of $\rho(y^{'}|y)$ and $\rho_{\not{x}}(y^{'}|y)$. The covariance matrix for the joint $z^{'}$ and $z$ is given by 
\[R_{z^{'}z}=\begin{pmatrix}\Sigma_{z^{'}}&\Sigma_{z^{'}z}\\\Sigma_{zz^{'}}& \Sigma_{z}\end{pmatrix},\;\;\Sigma_{z}=\begin{pmatrix}\Sigma_x&\Sigma_{xy}\\\Sigma_{xy}^\top& \Sigma_y\end{pmatrix}\]
where $\Sigma_{z^{'}}=A\Sigma_{z}A^\top+\sigma^2 I$, $\Sigma_{zz^{'}}=\Sigma_z A^\top$, $\Sigma_{z^{'}z}=A\Sigma_z=\Sigma_{zz^{'}}^\top$ 
Hence, we have 
\[R_{y^{'}y}=\begin{pmatrix}\Sigma_{y^{'}}&\Sigma_{y^{'}y}\\\Sigma_{y^{'}y}^\top& \Sigma_{y}\end{pmatrix},y^{'}|y\sim {\cal N}(-, \Sigma_{y^{'}}-\Sigma_{y^{'}y}\Sigma_y^{-1}\Sigma_{y^{'}y}^\top)\]
\begin{eqnarray*}&\Sigma_{y^{'}}=A_{yx}\Sigma_xA_{yx}^\top+A_{y}\Sigma_{xy}A_{yx}^\top+A_{yx}\Sigma_{xy}A_{y}^\top\nonumber\\&+A_{y}\Sigma_yA_{y}^\top+\sigma^2 I
\end{eqnarray*} \[\Sigma_{y^{'}y}=A_{yx}\Sigma_{xy}+A_y\Sigma_y\]
Note that since the entropy is function of covariance, we ignore the explicit computation of mean for the underlying Gaussian random variable. 
After simplification, we obtain 
\[\rho(y^{'}|y)\sim {\cal N}(-, A_{yx}(\Sigma_x-\Sigma_{xy}\Sigma_y^{-1}\Sigma_{xy}^\top)A_{yx}^\top+\sigma^2 I).\]
For $\rho_{\not{x}}(y^{'}|y)$ we look at the system, $
y^{'} = A_y y + \sigma \xi_y$.
Hence, we have $\rho_{\not{x}}(y^{'}|y) \sim{\cal N}\left(-, {\sigma^2}I\right)$.
Hence we have 
\[T_{x\to y}(t)=\frac{1}{2}\log \frac{|A_{yx}\Sigma^s_y(t)A_{yx}^\top +\sigma^2 I|}{\sigma^2}\]
where $\Sigma^s_y(t)=\Sigma_x(t)-\Sigma_{xy}(t)\Sigma_y(t)^{-1}\Sigma_{xy}(t)^\top$. 

To compute the information transfer from $x_1\to y$, we need to compute covariance for $\rho_{\not{x}_1}(y^{'}|y)$. Hence, we look at the system where $x_1$ is absent. Hence, we look at
\begin{eqnarray}
&&x_2^{'} = A_{x_2}x_2 + A_{x_2y} y + \xi_{x_2}\nonumber\\
&&y^{'} = A_{yx_2}x_2 + A_{y} y + \xi_{y}\label{yx1}
\end{eqnarray}
The expression for $\rho(y^{'}|y)$ remains unchanged from the previous case. To compute  $\rho_{\not x_1}(y^{'}|y)$, we use system equation (\ref{yx1}) and use procedure similar to one used in the derivation of $\rho_{\not x}(y^{'}|y)$ with $x$ replaced with $x_1$. Hence, we have
\begin{eqnarray}
\rho_{\not{x_1}}(y^{'}|y) = \mathcal{N}\left(-,A_{yx_2}(\Sigma_y^s)_{yx_2}(t)A_{yx_2}^\top + \sigma^2 I\right)
\end{eqnarray}
where, 
\begin{eqnarray*}
(\Sigma_y^s)_{yx_2}(t) = (\Sigma_{x_2}(t) - \Sigma_{x_2y}(t)\Sigma_y^{-1}(t)\Sigma_{x_2y}^\top (t)).
\end{eqnarray*}
Hence,
\begin{eqnarray}
[T_{x_1\to y}]_t^{t+1}=\frac{1}{2}\log \frac{|A_{yx}\Sigma^s_y(t)A_{yx}^\top +\sigma^2 I |}{|A_{yx_2}(\Sigma_y^{s})_{yx_2}(t)A_{yx_2}^\top+\sigma^2 I|}
\end{eqnarray}.
\end{proof}
The results from the above theorem can be used to provide general expression for information transfer between scalar state $z_i$ and $z_j$ for linear network system. In particular with no loss of generality we can assume $z_i=z_1$ and $z_j=z_2$, then the expression for  $T_{z_1\to z_2}$ can be obtained from (\ref{transferx1y}) by defining 
\[x_1:=z_1,\;\;\; y=z_2, \;\;x_2:=(z_3,\ldots,z_N)\] 

The formula for $n$-step transfer can also be derived in a similar manner, where one has to look at the $n$-step covariance matrix.

For linear systems with Gaussian noise, the one step zero transfer can be characterized by looking at system matrix $A$. In particular, we have the following theorem.

\begin{theorem}
 $A_{z_jz_i}=0$, iff $[T_{z_i\to z_j}]_t^{t+1}= 0$.
\end{theorem}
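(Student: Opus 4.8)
The plan is to derive the statement directly from the explicit linear formula (\ref{transferx1y}), specialised to scalar states. Taking $x_1:=z_i$, $y:=z_j$ and $x_2:=(z_1,\dots,z_N)$ with the $i$-th and $j$-th components deleted, as indicated right after the previous theorem, and noting that $y$ is now one-dimensional so that all the determinants in (\ref{transferx1y}) are determinants of $1\times 1$ matrices, we obtain
\begin{equation*}
[T_{z_i\to z_j}]_t^{t+1}=\frac{1}{2}\log\frac{A_{yx}\,\Sigma^s_y(t)\,A_{yx}^\top+\sigma^2}{A_{yx_2}\,(\Sigma^s_y)_{yx_2}(t)\,A_{yx_2}^\top+\sigma^2},
\end{equation*}
a ratio of two positive scalars, so the transfer vanishes exactly when numerator equals denominator.

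Next I would record the relevant algebraic identity. Write $A_{yx}=(\,A_{z_jz_i}\mid A_{yx_2}\,)$ and partition the conditional covariance as $\Sigma^s_y(t)=\begin{pmatrix}\alpha&\beta\\ \beta^\top&\Gamma\end{pmatrix}$ conformably with this split; here $\alpha=[\Sigma^s_y(t)]_{z_iz_i}$ is a scalar and, crucially, $\Gamma$ is precisely the $x_2$-block of $\Sigma^s_y(t)$, which one checks equals $(\Sigma^s_y)_{yx_2}(t)$. A one-line expansion of the quadratic form then gives
\begin{equation*}
A_{yx}\,\Sigma^s_y(t)\,A_{yx}^\top-A_{yx_2}\,(\Sigma^s_y)_{yx_2}(t)\,A_{yx_2}^\top=A_{z_jz_i}\bigl(\alpha\,A_{z_jz_i}+2\,A_{yx_2}\beta^\top\bigr).
\end{equation*}
The ``if'' direction is then immediate: if $A_{z_jz_i}=0$ the right-hand side is $0$ for every $t$, so numerator and denominator coincide and $[T_{z_i\to z_j}]_t^{t+1}=0$. (Conceptually: with $A_{z_jz_i}=0$ the one-step update of $z_j$ does not involve $z_i$, so freezing $z_i$ alters neither $\rho(z_j(t+1)\mid z_j(t))$ nor its entropy; this also explains why the conclusion persists over any number of steps, consistent with Example \ref{example_zero_IT}.)

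For the ``only if'' direction, if the transfer vanishes then $A_{z_jz_i}\bigl(\alpha A_{z_jz_i}+2A_{yx_2}\beta^\top\bigr)=0$. Since the noise term $\sigma^2 I$ forces $\Sigma(t)$, hence its Schur complement $\Sigma^s_y(t)$, to be positive definite (so $\alpha>0$), either $A_{z_jz_i}=0$, as desired, or the cancellation $\alpha A_{z_jz_i}=-2A_{yx_2}\beta^\top$ occurs. To eliminate the second possibility I would evaluate the transfer at a time step at which the cross-block $\beta$ of $\Sigma^s_y(t)$ vanishes — e.g. at $t=0$ with $\Sigma(0)=I$, where $\Sigma^s_y(0)=I$, so $\beta=0$ and $\alpha=1$; there the vanishing of the transfer reduces to $A_{z_jz_i}^2=0$, i.e. $A_{z_jz_i}=0$. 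Equivalently, for $A_{z_jz_i}\neq 0$ the transfer is strictly positive at such a time step, which is the contrapositive.

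The single point that needs care is exactly this last step. Viewed as a function of $a=A_{z_jz_i}$, the numerator $A_{yx}\Sigma^s_y(t)A_{yx}^\top$ is a convex parabola (leading coefficient $\alpha>0$) that meets the ``frozen'' value $A_{yx_2}(\Sigma^s_y)_{yx_2}(t)A_{yx_2}^\top$ at $a=0$ and, in general, at one further point $a=-2A_{yx_2}\beta^\top/\alpha$; thus a literal fixed-$(t,\Sigma(0))$ reading of the equivalence could fail on a measure-zero set of data. Reading the statement as an identity in $t$ (and invoking analyticity of $t\mapsto\Sigma(t)$), or simply exhibiting one convenient time step as above, closes this gap; everything else follows in a couple of lines from formula (\ref{transferx1y}) together with positive definiteness of $\Sigma^s_y(t)$.
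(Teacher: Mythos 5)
Your approach is the same as the paper's: the paper's entire proof is the sentence ``The proof follows from formula (\ref{transferx1y})'', and you have simply carried out the expansion that sentence leaves implicit. Your scalar specialisation, the identification of the $x_2$-block of $\Sigma^s_y(t)$ with $(\Sigma^s_y)_{yx_2}(t)$, and the identity
$A_{yx}\Sigma^s_y(t)A_{yx}^\top-A_{yx_2}(\Sigma^s_y)_{yx_2}(t)A_{yx_2}^\top=A_{z_jz_i}\bigl(\alpha A_{z_jz_i}+2A_{yx_2}\beta^\top\bigr)$
are all correct, and the ``if'' direction is complete. What you add beyond the paper is the honest observation that the ``only if'' direction does not follow from the formula at a fixed $t$: the quadratic in $a=A_{z_jz_i}$ can cross the frozen value at the second root $a=-2A_{yx_2}\beta^\top/\alpha$, so a nonzero $A_{z_jz_i}$ can give zero one-step transfer for particular $(t,\Sigma(t))$. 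This is a genuine gap in the theorem as literally stated, which the paper's one-line proof glosses over. Be careful, though, with your proposed repair: evaluating at $t=0$ with $\Sigma(0)=I$ assumes you may choose the initial covariance, which the theorem does not grant (the paper only assumes Gaussian initial conditions with some $\Sigma(0)$, possibly degenerate, in which case $\Sigma^s_y(0)$ need not even be positive definite). The defensible readings are either a genericity statement or ``$[T_{z_i\to z_j}]_t^{t+1}=0$ for all $t$'', and even the latter requires ruling out the cancellation $\alpha(t)A_{z_jz_i}=-2A_{yx_2}\beta(t)^\top$ holding identically in $t$, which you should state as a hypothesis or argue separately; so your proof is sound for the ``if'' direction and for the ``only if'' direction under an explicitly stated nondegeneracy assumption, which is more than the paper itself establishes.
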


\begin{proof} The proof follows from formula (\ref{transferx1y}).

\end{proof}

\subsection{Information transfer and Causal Inference in Nonlinear Systems}
In this subsection we connect information transfer and causal inference for general nolinear systems. However, we will consider systems without noise. In particular, we consider the system 
\begin{eqnarray}\label{nonlinear_sys}
z_{t+1} = T(z_{t})
\end{eqnarray}
where $T : \mathbb{R}^n \to \mathbb{R}^n$ is assumed to be at least continuous and $z(t)\in \mathbb{R}^n$.
In component form, equation (\ref{nonlinear_sys}) can be written as
\begin{eqnarray}\label{nonlinear_sys_component}
\begin{aligned}
 z_{t+1}^1 & = T_1(z_t^1,z_t^2, \cdots , z_t^n)\\
 z_{t+1}^2 & = T_2(z_t^1,z_t^2, \cdots , z_t^n)\\
& \vdots \\
 z_{t+1}^n & = T_n(z_t^1,z_t^2, \cdots , z_t^n)\\
\end{aligned}
\end{eqnarray}
Information transfer between the states of a dynamical system is defined in terms of difference of certain entropies, as the dynamical system evolves in time. As such to compute the information transfer, it is necessary to know the evolution of densities. This is given by the Perron-Frobenius (PF) operator 
\begin{eqnarray}
\mathbb{P} : \mathcal{L}^1(\mathbb{R}^n) \to \mathcal{L}^1(\mathbb{R}^n)
\end{eqnarray}
and is defined as
\begin{eqnarray}
\int_{\omega}\mathbb{P}\rho(z)dz = \int_{T^{-1}(\omega)}\rho(z)dz 
\end{eqnarray}
for any $\omega\in\mathcal{B}(\mathbb{R}^n)$, where $\mathcal{B}(\mathbb{R}^n)$ is the Borel $\sigma$-algebra on $\mathbb{R}^n$. When $T$ is non-singular and invertible, the PF operator can be written explicitly as 
\begin{eqnarray}
\mathbb{P}\rho(z) = \rho [T^{-1}(z)]|J^{-1}|
\end{eqnarray}
where $J$ is the Jacobian of $T$ and $|\cdot |$ signify the determinant. The joint density $\rho(z)$ is used to define the Shannon entropy
\begin{eqnarray}
H(z) = - \int_\Omega\rho(z)\log \rho(z)dz
\end{eqnarray}
where $\Omega$ is the sample space.
\begin{assumption}\label{sample_space_assumption}
We assume $\Omega$ to be Cartesian product of $\Omega_1,\Omega_2, \cdots ,\Omega_n$, in which $z^1,z^2,\cdots , z^n$ are respectively lying. Here each $\Omega_i$ are open in $\mathbb{R}$, for $i = 1,2, \cdots , n$.
\end{assumption}

The marginal entropy of any variable of interest, say $z_1$ is defined as
\begin{eqnarray}
H(z^1) = -\int_{\Omega_1}\rho_1(z^1)\log\rho_1(z^1)dz^1
\end{eqnarray} 
where $\rho_1 = \rho(z^1) = \int_{\Omega_{2n}}\rho(z)dz^2dz^3\cdots dz^n$ is the marginal distribution of $z^1$. Here we use the notation
\begin{eqnarray}\label{notation_omega}
\Omega_{jn} = \Omega_j\times\Omega_{j+1}\times \cdots \times \Omega_n, \quad 1\leq j\leq n, \textnormal{ and } \Omega_{1n} = \Omega.
\end{eqnarray}
We further use the notation
\begin{eqnarray*}
&&\rho_{\not{i}} = \rho(z^1,\cdots z^{i-1},z^{i+1},\cdots , z^n)\\
&&= \int_{\Omega_i}\rho(z^1,\cdots z^{i-1},z^{i+1},\cdots , z^n)dz^i\\
&&\rho_{\not{i}\not{j}} = \rho(z^1,\cdots z^{i-1},z^{i+1},\cdots , z^{j-1},z^{j+1}\cdots ,z^n)\\
&&=\int_{\Omega_i\times\Omega_j}\rho(z^1,\cdots z^{i-1},z^{i+1},\cdots , z^{j-1},z^{j+1},\cdots , z^n)dz^idz^j
\end{eqnarray*}

With this, we state the following theorem which connects information transfer with causal inference.

\begin{theorem}
If $T_j$ is independent of $z^i$, then $T_{z^i\to z^j}=0$. At the same time, $T_{z^i\to z^j}$ need not be zero if $T_j$ does not rely on $z^i$.
\end{theorem}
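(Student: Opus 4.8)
The plan is to prove the two assertions separately: the implication ``$T_j$ independent of $z^i\Rightarrow T_{z^i\to z^j}=0$'' is essentially bookkeeping with Definition~\ref{IT_def}, while the ``need not be zero'' half requires an explicit example. Throughout write $\hat z(t)$ for the state $z(t)$ with its $i$-th coordinate deleted, which by Assumption~\ref{sample_space_assumption} carries the marginal density $\rho_{\not i}$.

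For the first assertion, assume $T_j$ does not contain $z^i$ among its arguments, so $z^j(t+1)=T_j(\hat z(t))$. By Definition~\ref{IT_def} it is enough to show that $\rho(z^j(t+1)\mid z^j(t))$ equals the frozen density $\rho_{\not i}(z^j(t+1)\mid z^j(t))$. The joint density of $(z^j(t+1),z^j(t))$ is the push-forward of $\rho(z(t))$ under $z(t)\mapsto(T_j(\hat z(t)),z^j(t))$; since that map never reads $z^i(t)$, it is already the push-forward of the marginal $\rho_{\not i}(\hat z(t))$ under $\hat z(t)\mapsto(T_j(\hat z(t)),z^j(t))$. The frozen dynamics --- in which $z^i$ ceases to feed the other states --- leaves the $z^j$-update $z^j(t+1)=T_j(\hat z(t))$ untouched and is propagated from exactly $\rho_{\not i}$, so the frozen conditional comes from the same push-forward. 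Hence the two conditional densities coincide, their entropies agree, and $[T_{z^i\to z^j}]_t^{t+1}=0$; applying the same computation to the conditionals of~(\ref{n_stepIT}) extends this to the $n$-step transfer whenever the $n$-fold composition producing $z^j(t+n)$ ignores $z^i$.

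For the second assertion I would exhibit the three-state noiseless cascade
\[
z^1(t+1)=\phi_1(z^1(t)),\qquad z^2(t+1)=\phi_2(z^1(t),z^2(t)),\qquad z^3(t+1)=\phi_3(z^2(t),z^3(t)),
\]
in which $T_3=\phi_3$ manifestly does not rely on $z^1$, so $[T_{z^1\to z^3}]_t^{t+1}=0$ by the first part. Yet the two-step update $z^3(t+2)=\phi_3\big(\phi_2(z^1(t),z^2(t)),\,\phi_3(z^2(t),z^3(t))\big)$ does depend on $z^1(t)$, while freezing $z^1$ over $[t,t+2]$ removes that dependence; so $\rho(z^3(t+2)\mid z^3(t+1),z^3(t))$ retains spread contributed by $z^1(t)$ that $\rho_{\not 1}(z^3(t+2)\mid z^3(t+1),z^3(t))$ lacks, and $(T_{z^1\to z^3})_t^{t+2}\neq0$. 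To make this quantitative I would take $\phi_1(z)=az$, $\phi_2(u,v)=bu+cv$, $\phi_3(u,v)=du+ev$ with $a,c,e\neq0$ and $b,d\neq0$, add a vanishing Gaussian forcing $\sigma\xi(t)$, and run the Gaussian computation from the derivation of~(\ref{transferx1y}) on the two-step covariance (as noted in the remark following that theorem): the relevant Schur complement then picks up the $(3,1)$-entry of the two-step system matrix, which is $bd\neq0$, forcing the transfer strictly positive; as $\sigma\to0$ it stays bounded away from zero (indeed it diverges, as entropy differences of deterministic maps tend to). This is a system in which $T_3$ is independent of $z^1$ yet $T_{z^1\to z^3}\neq0$.

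The main obstacle is analytic regularity in the noiseless case rather than the combinatorics: a conditional density such as $\rho(z^j(t+1)\mid z^j(t))$ is the disintegration of the push-forward of an $n$-dimensional density under a map into $\mathbb{R}^2$, which need not be absolutely continuous and whose differential entropy may be $-\infty$ without nondegeneracy hypotheses on $T$ and $\rho(z(t))$. I would handle this either by restricting to maps and initial densities for which these disintegrations are regular (compatible with Assumption~\ref{sample_space_assumption}), or by carrying the argument out for the perturbed system $z(t+1)=T(z(t))+\sigma\xi(t)$ using the Perron--Frobenius/Gaussian machinery already developed and letting $\sigma\to0$; with either device, the equality of conditional densities in the first part and the strict positivity in the second survive.
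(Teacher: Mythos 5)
Your treatment of the first claim is correct and is essentially the paper's own argument: the paper, too, reduces everything to showing that the density relevant to $z^j$ after one step is unchanged when $z^i$ is excluded, because $T_j$ never reads $z^i$. The paper packages this slightly differently --- it doubles the state ($\hat z_t=z_{t+1}$) so that the pair $(z^j(t+1),z^j(t))$ becomes part of a current state, works with the Perron--Frobenius operator to prove $(\mathbb{P}\rho_{\not{x}^2})_1=(\mathbb{P}\rho)_1$ almost everywhere, and then invokes the frozen-entropy formula from the Liang--Kleeman discrete framework, in which an additional average against $\rho(x^2\mid x^1,x^3,\dots)$ appears and integrates to one. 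Your push-forward phrasing compresses exactly these steps; the only detail it glosses over is that the frozen entropy is defined with that extra conditional average, which is harmless here precisely because the marginal push-forward does not depend on the frozen value.

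The second clause is where you and the paper part ways, though not because your idea is wrong: the paper's proof simply stops after establishing $T_{z^2\to z^1}=0$ and never proves the ``need not be zero'' statement at all. Read literally with the one-step transfer, that sentence contradicts the first one, so it must be intended either as the usual Liang--Kleeman asymmetry property (the transfer in the \emph{opposite} direction, $T_{z^j\to z^i}$, can be nonzero even when $T_j$ ignores $z^i$) or, as you read it, as a statement about the $n$-step transfer of (\ref{n_stepIT}) picking up indirect influence. Your cascade example is consistent with the paper's Example \ref{example_indirect} and is a reasonable way to substantiate the multi-step reading, but as written it is a plan rather than a proof: the nonvanishing of the two-step transfer is inferred from the $(3,1)$ entry $bd$ of the squared system matrix rather than computed from the (unstated) $n$-step analogue of (\ref{transferx1y}), and the $\sigma\to 0$ limit and the noiseless regularity issues you flag are acknowledged but not resolved. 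If you want a complete proof of the theorem as stated, either carry out that two-step Gaussian computation explicitly, or --- if the intended meaning is the asymmetry property --- note that a two-dimensional linear system with $A_{ji}=0$ but $A_{ij}\neq 0$ settles it immediately via (\ref{transferx1y}).
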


\begin{proof}
Without loss of generality, we assume $i=2$ and $j=1$, so that we look at the information transfer from $z^2$ to $z^1$. We consider the system $z_{t+1} = T(z_t)$ given in component form as in (\ref{nonlinear_sys_component}). Define $\hat{z}_t^i = z_{t+1}^i$ and consider the dynamical system
\begin{eqnarray}\label{sys_extended_z}
\begin{aligned}
z_{t+1}^i &=& T_i(z)\\
\hat{z}_{t+1}^i &=& T_i(\hat{z})
\end{aligned}
\end{eqnarray}
for $i = 1,2,\cdots , n$. 
Now, from (\ref{info_trnasfer_eqv}),
\begin{eqnarray}\label{info_z_hat}\nonumber
T_{z^2\to z^1} &=& H(z_{t+1}^1,z_t^1) - H_{\not{z}_2}(z_{t+1}^1,z_t^1)\\
&=& H(z_{t+1}^1,\hat{z}_{t+1}^1) - H_{\not{z}_2}(z_{t+1}^1,\hat{z}_{t+1}^1)
\end{eqnarray}

It is given that $T_1(z)$ is independent of $z^2$. Hence, $T_1(\hat{z})$ is independent of $\hat{z}^2$ and also $z^2$. To prove that if $T_1(z)$ is independent of $z_2$, then $T_{z_2\to z_1}=0$, we have to show
\begin{eqnarray*}
H(z_{t+1}^1,\hat{z}_{t+1}^1) = H_{\not{z}_2}(z_{t+1}^1,\hat{z}_{t+1}^1)
\end{eqnarray*}

Let $U$ be the sample space where the system (\ref{sys_extended_z}) evolves. Note that $U$ satisfies assumption (\ref{sample_space_assumption}) and we use the notation defined in (\ref{notation_omega}). For notational convenience we use the notation $x_t^1$ to signify the subspace $(z_t^1,\hat{z}_t^1)^\top$, $x_t^2 = z_t^2$, $y_t = \hat{z}_t^2$, $x_t^i = (z_t^i,\hat{z}_t^i)^\top$ for $i = 3,\cdots , n$. Similarly we define $U_i$ and $U_y$. Hence we need to prove 
\begin{eqnarray*}
H(x_{t+1}^1) = H_{\not{x}_2}(x_{t+1}^1)
\end{eqnarray*}
The dynamical system (\ref{sys_extended_z}) can be written as
\begin{eqnarray}\label{sys_extended_compact}
\begin{aligned}
& x_{t+1}^i = \phi_i(x_t,y_t)\\
& x_{t+1}^2 = \phi_2(x_t,y_t)\\
& y_{t+1} = \phi_y(x_t,y_t)
\end{aligned}
\end{eqnarray}
for $i = 1, 3, \cdots , n$ and $\phi_i = (T_i,T_i)^\top$, $\phi_2 = T_2$ and $\phi_y=T_2$. Define $\Phi = \begin{pmatrix}
\phi_i^\top & \phi_2 & \phi_y
\end{pmatrix}^\top$.
For any subset of $\tilde{U}_1$, $\tilde{u}_1\in \tilde{U}_1$, we have
\begin{eqnarray*}
\int_{\tilde{u}_1}(\mathbb{P}\rho_{\not{x}^2})_1(x^1)dx^1 &=& \int_{\tilde{u}_1\times  U_{3n}\times U_y}(\mathbb{P}\rho_{\not{x}_2})(x^1,y,x^3,\cdots ,x^n)\\
&& \times dx^1 dx^3\cdots dx^n dy\\
&=& \int_{\Phi_{\not{2}}^{-1}(\tilde{u}_1\times  U_{3n}\times U_y)}\rho_{\not{x}_2}(x^1,y,x^3,\cdots ,x^n)\\
&& \times dx^1 dx^3\cdots dx^n dy\\
\end{eqnarray*}
Now\footnote{In the proof, $\Phi_{1\not{2}}^{-1}\tilde{u}_1$ and $\phi_1^{-1}\tilde{u}_1$ is the projection of the Cartesian product on the subspace $x^1$.},
\begin{eqnarray}
{\Phi_{\not{2}}^{-1}(\tilde{u}_1\times  U_{3n}\times U_y)} = \Phi_{1\not{2}}^{-1}\tilde{u}_1\times U_{3n}\times U_y
\end{eqnarray}

Hence, 
\begin{eqnarray*}
\int_{\tilde{u}_1}(\mathbb{P}\rho_{\not{x}^2})_1(x^1)dx^1 &=& \int_{\Phi_{1\not{2}}^{-1}\tilde{u}_1}dx^1\\
&&\quad \cdot \int_{U_{3n}\times U_y}\rho_{\not{x}^2}(x^1,x^3,\cdots , x^n,y)\\
&& \quad \times dx^1 dx^3\cdots dx^n dy\\
&=& \int_{\Phi_{1\not{2}}^{-1}\tilde{u}_1}\rho_1(x^1)dx^1\\
&=& \int_{{\phi_1}^{-1}\tilde{u}_1}\rho_1(x^1)dx^1
\end{eqnarray*}
where the last equality follows from the fact that $\phi_1$ and hence $\phi_1^{-1}$ is independent of $x_2$.

Again,
\begin{eqnarray*}
\int_{\tilde{u}_1} (\mathbb{P}\rho)_1(x^1)dx^1 &=& \int_{\tilde{u}_1\times U_2n\times U_y}\mathbb{P}\rho(x,y)dxdy\\
&=& \int_{\Phi^{-1}({\tilde{u}_1\times U_{2n}\times U_y})}\rho(x,y)dxdy\\
&=& \int_{\phi^{-1}\tilde{u}_1\times U_{2n}\times U_y}\rho(x,y)dxdy\\
&=& \int_{\phi_1^{-1}\tilde{u}_1}dx^1\int_{U_{2n}\times U_y}\rho(x)dx^2\cdots dx^ndy\\
&=& \int_{\phi_1^{-1}\tilde{u}_1}\rho_1(x^1)dx^1.
\end{eqnarray*}

Hence, 
\begin{eqnarray}
\int_{\tilde{u}_1}(\mathbb{P}\rho_{\not{x}^2})_1(x^1)dx^1 = \int_{\tilde{u}_1} (\mathbb{P}\rho)_1(x^1)dx^1
\end{eqnarray}
for any $\tilde{u}_1\in U_1$. Hence, $(\mathbb{P}\rho_{\not{x}^2})_1=(\mathbb{P}\rho)_1$ almost everywhere, when $\phi_1$ is independent of $x^2$. 
Now, in \cite{liang_discrete} it was shown that
\begin{eqnarray*}
H_{\not{x}^2}(x_{t+1}^1) &=& -\int_U(\mathbb{P}\rho_{\not{x}^2})_1(y^1)\log((\mathbb{P}\rho_{\not{x}^2})_1(y^1))\\
&& \cdot \rho(x^2|x^1,x^3\cdots , x^n,y)\\
&& \cdot \rho_{3\cdots n,y}(x^3,\cdots x^n, y)dy^1dx^2dx^3\cdots dx^ndy\\
&=& -\int_U(\mathbb{P}\rho)_1(y^1)\log((\mathbb{P}\rho)_1(y^1))\\
&& \cdot \rho(x^2|x^1,x^3\cdots , x^n,y)\\
&& \cdot \rho_{3\cdots n,y}(x^3,\cdots x^n, y)dy^1dx^2dx^3\cdots dx^ndy\\
&=& - \int_{U_1}(\mathbb{P}\rho)_1(y^1)\log((\mathbb{P}\rho)_1(y^1))dy^1\\
&& \cdot \Big[\int_{U_2}\int_{U_{3n}\times U_y}\rho(x^2|x^1,x^3\cdots , x^n,y)\\
&& \cdot \rho_{3\cdots n,y}(x^3,\cdots x^n, y)dx^2dx^3\cdots dx^ndy\Big]\\
&=& - \int_{U_1}(\mathbb{P}\rho)_1(y^1)\log((\mathbb{P}\rho)_1(y^1))dy^1\\
&=& H(x_{t+1}^1)
\end{eqnarray*}
This is because the integrand in the brackets integrate to one. Hence, $H(x_{t+1}^1) = H_{\not{x}^2}(x_{t+1}^1)$, that is,
\begin{eqnarray*}
H(z_{t+1}^1) = H_{\not{z}^2}(z_{t+1}^1)
\end{eqnarray*}
Hence, $T_{z^2\to z^1} = 0$. 
\end{proof}

\subsection{Revisiting the examples}

In this subsection we revisit the examples presented in section (\ref{section_DI}) and show that the new definition of information overcome some of the issues with directed information. 

{\it Example \ref{example_zero_IT} revisited :}
For the three state example discussed in section \ref{section_DI}, we compute the information transfer using our proposed definition of information transfer and the formula (\ref{transferx1y}). In Fig. 4(a), we plot the information transfer over $n$ time steps and we notice that the information transfer from $x_1\to x_2$ and $x_1\to x_3$ are identically zero. 

%


{\it Example \ref{example_indirect} revisited :} We compute the information transfer from $x_1\to x_k$ and from $x_k\to x_N$ for $k=15$. Since, $x_1$ is connected to $x_k$ through series of $k$ delays, as expected the information transfer $T_{x_1\to x_k}$ is zero over $n<k$ time step and for $n=k$ the information transfer jumps from zero to non-zero value as shown in Fig. 4(b). Similarly information transfer from $x_k\to x_N$ also shows a sudden jump. This proves that our proposed definition of information transfer obeys the time constraints of transfer.


\begin{figure}[htp!]
\centering
\subfigure[]{\includegraphics[scale=.38]{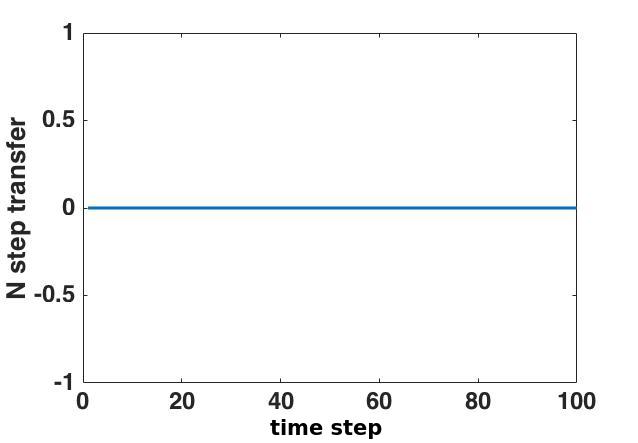}}\label{IT_3_node}
\subfigure[]{\includegraphics[scale=.38]{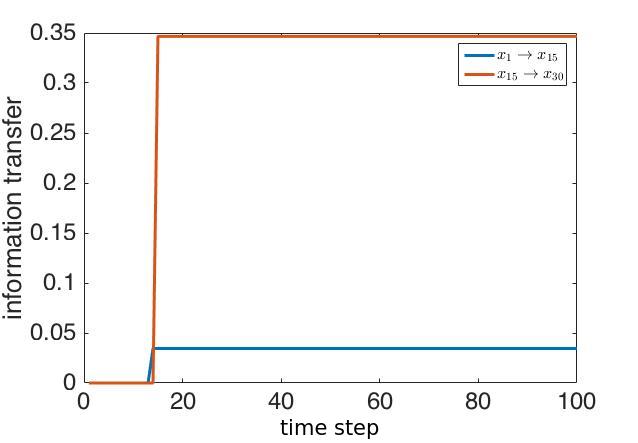}}\label{IT_chain_30_nodes}
\caption{(a) N-step information transfer for example 1. (b) N-step information transfer for example 2.}
\end{figure}

\section{Information Transfer in Linear Control Systems}\label{section_IT_CDS}
In this section, we derive expression for information transfer in control dynamical system. In particular, we derive expressions for information transfer from state to output, input to state, and input to output. For the simplicity of discussion we also restrict our discussion to single input single output case. 
 
We consider the following linear linear time invariant  system with input and output. 
\begin{eqnarray}
\begin{aligned}
& z(t+1) =  A z(t) + Bu(t)\\
& \theta(t) = C z(t) + \omega(t)
\end{aligned}
\end{eqnarray}

where $z\in\mathbb{R}^N$ are the states, $A\in\mathbb{R}^{N\times N}$, $B$ and $C$ are of appropriate dimensions, $u(t)\in\mathbb{R}$ is the input, $\theta\in\mathbb{R}$ is output and $\omega(t)$ is the output noise, which is assumed to be zero mean i.i.d. Gaussian noise. For the simplicity of presentation, we will restrict our discussion to the case where the state space $z$ is split in only two subspace i.e., $z=(x^\top,y^\top)^\top$ and the inputs and outputs are one dimensional. With this assumption, we have following splitting of the $A,B$, and $C$ matrices. 


\begin{eqnarray}
A = \begin{pmatrix}
A_x & A_{xy} \\ 
A_{yx} & A_y
\end{pmatrix}, B = \begin{pmatrix}
B_x\\
B_y
\end{pmatrix} , C = \begin{pmatrix}
C_x & C_y
\end{pmatrix}
\end{eqnarray}

\subsection{Information from Input to State}
The evolution of the state $x$ is $x' = A_xx + A_{xy}y + B_{x}u$. In deriving the formulas in this section, we think of the input $u(t)$ as a i.i.d. Gaussian variable such that
\begin{eqnarray*}
\textnormal{E}[u(t)u(t+k)^\top] = 0, \forall k \neq t \textnormal{ and }\textnormal {E}[zu^\top] = 0.
\end{eqnarray*} 

Hence,
\begin{eqnarray}
\Sigma_{x'|x} = A_{xy}\Sigma_x^sA_{xy}^\top + B_{x}\Sigma_u B_{x}^\top
\end{eqnarray}
where $\Sigma_x^s$ is the Schur complement of $\Sigma_x$ in $\Sigma_z$ and $\Sigma_u$ is the covariance of the input at time $t$.

When the input $u$ is held frozen, 
\begin{eqnarray}
\Sigma_{x'|x}^{\not{u}} = A_{xy}\Sigma_x^sA_{xy}^\top
\end{eqnarray}

Hence the information transfer from $u_1$ to $x$ is 
\begin{eqnarray}
T_{u\to x} = \frac{1}{2}\log\frac{|A_{xy}\Sigma_x^sA_{xy}^\top + B_{x}\Sigma_u B_{x}^\top|}{|A_{xy}\Sigma_x^sA_{xy}^\top|}
\end{eqnarray}
where $|\cdot|$ is the determinant.

\subsection{Information from State to Output} 
In this subsection, we look at the information transfer from the states of the system to the output of the control system. For simplicity, we only look at the information transfer from the entire state space $z$ to the output $\theta$. The general case of the information transfer from any state $z_i$ to any output $\theta_j$ for a MIMO system can be dealt with in similar manner.

From the output equation, we have
\begin{eqnarray*}
\Sigma_{\theta ' |\theta} = C\Sigma_{z'} C^\top + \Sigma_\omega -(CA\Sigma_zC^\top)(C\Sigma_zC^\top)^{-1}(CA\Sigma_zC^\top)^\top
\end{eqnarray*}
where $\Sigma_{z'}= A\Sigma_zA^\top + B\Sigma_uB^\top$ and $\Sigma_\omega$ is the covariance of the output noise.

When $z$ is frozen, the output equation is
\begin{eqnarray}
\theta_{\not{z}}(t) = \omega(t)
\end{eqnarray}

Hence, $\Sigma_{\theta '|\theta}^{\not{z}} = \Sigma_\omega$.

So the transfer is
\begin{eqnarray*}
T_{z\to \theta} = \frac{1}{2}\log\frac{|\Sigma_{\theta'|\theta}|}{|\Sigma_\omega|}
\end{eqnarray*}
where
\begin{eqnarray*}
\Sigma_{\theta'|\theta} = C\Sigma_{z'} C^\top + \Sigma_\omega-(CA\Sigma_zC^\top)(C\Sigma_zC^\top)^{-1}(CA\Sigma_zC^\top)^\top
\end{eqnarray*}

\subsection{Information from Input to Output}
As before, we address the case of SISO systems for simplicity.
We have 
\begin{eqnarray*}
\Sigma_\theta' = CA\Sigma_z A^\top C^\top + CB\Sigma_u B^\top C^\top
\end{eqnarray*}
Hence,
\begin{eqnarray}\label{output_covariance}
\Sigma_{\theta '|\theta} = CA\Sigma_z A^\top C^\top + CB\Sigma_u B^\top c^\top +\Sigma_\omega - (CA\Sigma_z C^\top)(C\Sigma_z C^\top)^{-1}(CA\Sigma_z C^\top)^\top
\end{eqnarray}
When $u$ is frozen, we have
\begin{eqnarray}
\begin{aligned}
& z'= A z\\
& \theta = Cz + \omega
\end{aligned}
\end{eqnarray}
Hence,
\begin{eqnarray}\label{output_covariance_u_frozen}
\Sigma_{\theta_{\not{u}}'|\theta_{\not{u}}} = CA\Sigma_z A^\top C^\top +\Sigma_\omega - (CA\Sigma_z C^\top)(C\Sigma_z C^\top)^{-1}(CA\Sigma_z C^\top)^\top
\end{eqnarray}

Hence the transfer is
\begin{eqnarray}
T_{u\to\theta} = \frac{1}{2}\log\frac{|\Sigma_{\theta '|\theta}|}{|\Sigma_{\theta_{\not{u}}'|\theta_{\not{u}}}|}
\end{eqnarray}
where $\Sigma_{\theta '|\theta}$ and $\Sigma_{\theta_{\not{u}}'|\theta_{\not{u}}}$ are given by equations (\ref{output_covariance}) and (\ref{output_covariance_u_frozen}) respectively.

\subsection{Information Transfer in Feedback Control Systems}
\begin{figure}
\centering
\includegraphics[scale=.55]{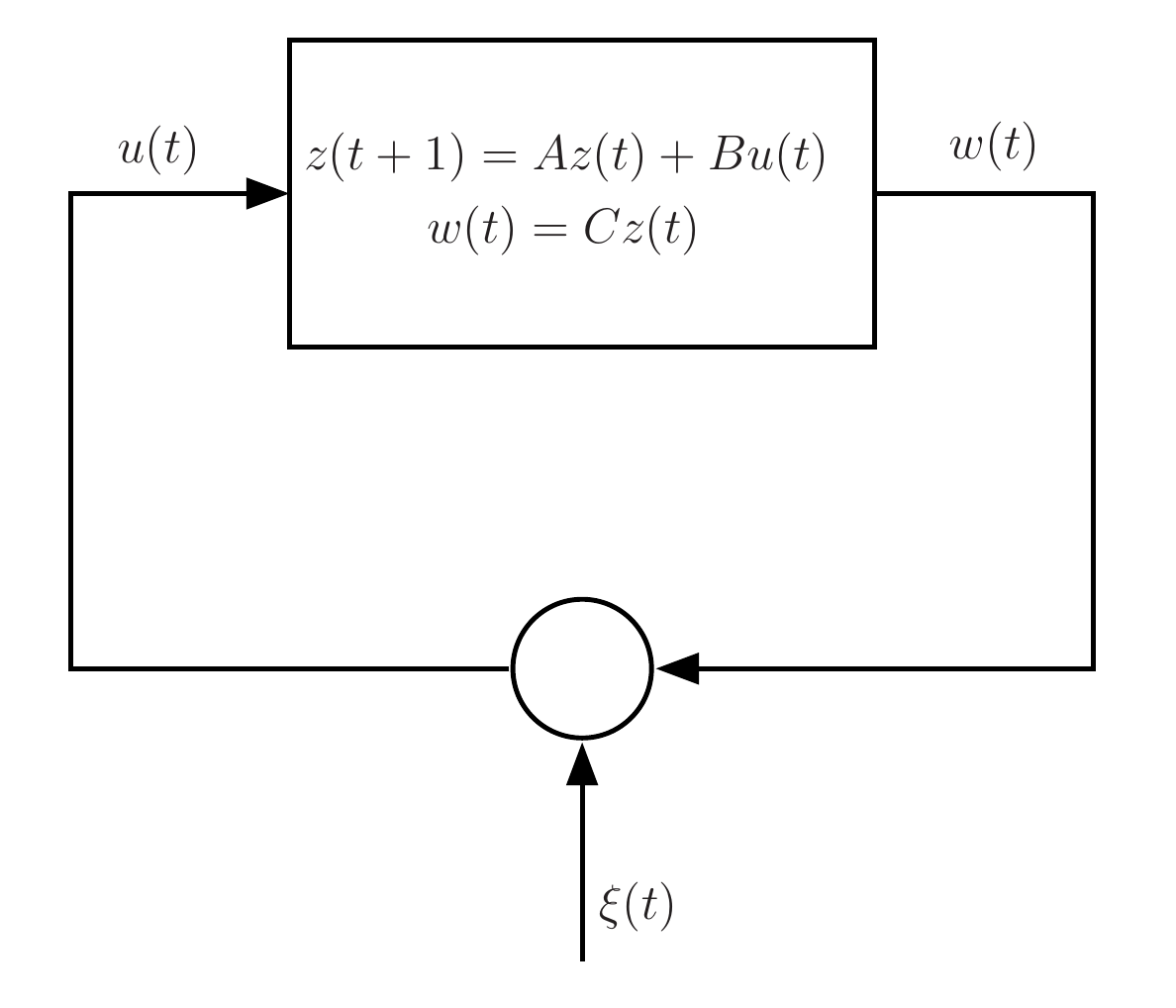}\label{feedback_fig}
\caption{Feedback control system}
\end{figure}

Information theory and feedback systems are interlinked and researchers have studied their interplay \cite{wong1999systems,tatikonda2004stochastic,li2013bode,tatikonda2004control}. Again, Massey \cite{IT_massey_directed} had shown that the feedback capacity $(C_{fb})$ satisfies 
\begin{eqnarray}
C_{fb} \leq \lim_{n\to\infty}\max_{\rho(X^n||Y^{n-1})}\frac{1}{n}I(X^n\to Y^n)
\end{eqnarray}
where $\rho(X^n||Y^{n-1})$ is the conditional distribution of $X^n$, causally conditioned on $Y^{n-1}$. Again, in \cite{elia2004bode}, it has been proved that in a feedback system, the Bode sensitivity transfer function from the output to the input is equal to the average directed information from the output $(w)$ to the input $(u)$, where the average directed information from the output to the input is defined as
\begin{eqnarray}
\lim_{n\to\infty}\frac{1}{n}I(w^n\to u^n)
\end{eqnarray}

In this section, we consider the following feedback control system (figure 5)
\begin{eqnarray}\label{feedback_system}
\begin{aligned}
& z(t+1)= Az(t)+Bu(t)\\
& w(t)=Cz(t),\;\;u(t)=w(t)+\xi(t)
\end{aligned}
\end{eqnarray}

and we are interested in computing information flow from output to input of the feedback control system i.e., $T_{w\to u}$. This is the information transfer from the plant to itself. Towards this goal we write the feedback control system as follows:
\begin{eqnarray}
z(t+1)=(A+BC) z(t)+B\xi(t)\nonumber\\
w(t)=Cz(t),\;\;\;u(t)=w(t)+\xi(t)
\end{eqnarray}

Hence, $H_{\not{w}}(\rho(u^{'}|u))=H(\rho(\xi^{'}))$. With this, and theorem (\ref{n_step_IT}), next we show that the Bode sensitivity transfer function, $\mathcal{S}$, from $w$ to $u$ is same as the average information transfer from $w$ to $u$.

\begin{theorem}
\begin{eqnarray}\label{bode}\nonumber
\bar{T}_{x\to y} = \int_{-\frac{1}{2}}^{\frac{1}{2}}\log |\mathcal{S}(e^{j2\pi\theta})|d\theta = \sum_{i = 1}^m\log|\lambda_i(A_u)|
\end{eqnarray}
where $\lambda_i(A_u)$ are the unstable eigenvalues of the open loop system.
\end{theorem}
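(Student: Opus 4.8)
The plan is to read $\bar{T}_{x\to y}$ in the statement as $\bar{T}_{w\to u}$, the average information transfer from the plant output $w$ to the plant input $u$ in the feedback loop (\ref{feedback_system}), and to evaluate it by converting its entropy-rate representation (\ref{entropy_rate}) into a spectral integral; the last equality then becomes the classical discrete-time Bode integral. By (\ref{entropy_rate}), which follows from Theorem \ref{n_step_IT}, we have $\bar{T}_{w\to u}=\lim_{n\to\infty}\frac{1}{n}\big[H(u^n)-H_{\not{w}}(u^n)\big]$. The frozen term is immediate: with $w$ held constant the loop collapses to $u(t)=w_{\mathrm{frozen}}+\xi(t)$, so the window $u^n$ is a deterministic shift of $(\xi(0),\dots,\xi(n))$, and since the $\xi(t)$ are i.i.d.\ Gaussian, $H_{\not{w}}(u^n)=(n+1)H(\rho(\xi))$, with rate $H(\rho(\xi))$ --- consistent with the identity $H_{\not{w}}(\rho(u'|u))=H(\rho(\xi'))$ noted above.

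The substance is the rate of $H(u^n)$. Rewriting the closed loop as $z(t+1)=(A+BC)z(t)+B\xi(t)$, $u(t)=Cz(t)+\xi(t)$, the transfer function from $\xi$ to $u$ is $\mathcal{S}(z)=I+C\big(zI-(A+BC)\big)^{-1}B$, and by the push-through (matrix inversion) identity $\mathcal{S}(z)=(I-G(z))^{-1}$ with $G(z)=C(zI-A)^{-1}B$ the open-loop transfer function --- i.e.\ $\mathcal{S}$ is precisely the Bode sensitivity function of the loop. Assuming the closed loop is stable, $u$ tends to a stationary Gaussian process whose (scalar SISO) power spectral density is $|\mathcal{S}(e^{j2\pi\theta})|^2\sigma^2$, where $\sigma^2$ is the variance of $\xi$. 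The entropy rate of a stationary Gaussian process with spectral density $\Phi$ equals $\frac{1}{2}\log(2\pi e)+\frac{1}{2}\int_{-1/2}^{1/2}\log\Phi(e^{j2\pi\theta})\,d\theta$ (Kolmogorov--Szeg\H{o}; equivalently, the Szeg\H{o} limit theorem applied to $\frac{1}{n}\log|\Sigma_{u^n}|$ inside $H(u^n)=\frac{1}{2}\log((2\pi e)^n|\Sigma_{u^n}|)$). Substituting $\Phi=|\mathcal{S}|^2\sigma^2$ and splitting the logarithm gives $\lim_{n}\frac{1}{n}H(u^n)=\frac{1}{2}\log(2\pi e\sigma^2)+\int_{-1/2}^{1/2}\log|\mathcal{S}(e^{j2\pi\theta})|\,d\theta=H(\rho(\xi))+\int_{-1/2}^{1/2}\log|\mathcal{S}(e^{j2\pi\theta})|\,d\theta$, and subtracting the frozen rate yields the first equality $\bar{T}_{w\to u}=\int_{-1/2}^{1/2}\log|\mathcal{S}(e^{j2\pi\theta})|\,d\theta$.

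For the second equality I would invoke the discrete-time Bode sensitivity integral. Since $G$ is strictly proper, $\mathcal{S}(\infty)=I$; writing (in the SISO case) $\mathcal{S}(z)=\det(zI-A)/\det(zI-(A+BC))$ shows that the poles of $\mathcal{S}$ are the closed-loop eigenvalues --- inside the open unit disc by the stabilizing assumption --- while the zeros of $\mathcal{S}$ are the open-loop eigenvalues $\lambda_i(A)$. Jensen's formula (equivalently, the Poisson--Jensen formula on the exterior of the unit disc, or the argument principle) then gives $\int_{-1/2}^{1/2}\log|\mathcal{S}(e^{j2\pi\theta})|\,d\theta=\sum_{|\lambda_i(A)|>1}\log|\lambda_i(A)|=\sum_{i=1}^{m}\log|\lambda_i(A_u)|$, where $A_u$ is the restriction of $A$ to its unstable eigenspace.

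The step I expect to be the main obstacle is the rigor of the spectral argument: one must justify that the $\frac{1}{n}$-limit is governed by the steady-state spectrum and is insensitive to the transient and to the initial covariance $\Sigma(0)$ --- which requires closed-loop stability together with a careful application of the (block-)Toeplitz Szeg\H{o} limit theorem to $\Sigma_{u^n}$ --- and one must confirm that the ``freezing'' operation yields the clean term $H_{\not{w}}(u^n)=H(\rho(\xi^n))$ over the whole window, not merely one step, so that Theorem \ref{n_step_IT} applies verbatim. Everything else reduces to bookkeeping with Gaussian entropies and determinant identities.
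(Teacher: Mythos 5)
Your proposal is correct and follows essentially the same route as the paper: the average transfer is written as the difference of the entropy rates of $u$ with and without $w$ frozen, the frozen rate collapses to the noise entropy $H(\rho(\xi))$, the Gaussian entropy rate of $u$ is converted to the spectral integral $\int_{-1/2}^{1/2}\log|\mathcal{S}(e^{j2\pi\theta})|\,d\theta$, and the final equality is the discrete-time Bode integral. You merely supply details the paper leaves implicit --- the push-through identification of $\mathcal{S}=(1-G)^{-1}$, the Kolmogorov--Szeg\H{o} limit (with the stability/transient caveat), and the Jensen-formula evaluation giving $\sum_i\log|\lambda_i(A_u)|$ --- including the correct reading of $\bar{T}_{x\to y}$ as $\bar{T}_{w\to u}$.
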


\begin{proof}
From theorem (\ref{n_step_IT}), we have
\begin{eqnarray*}
\begin{aligned}
&\bar{T}_{x\to y} = \lim_{T\to \infty} \frac{1}{T}\sum_{i=1}^T(T_{x\to y})_0^i \\
&=  \lim_{T\to \infty} \frac{1}{T} [H(u^T) - H_{\not{w}}(u^T)]= \lim_{T\to \infty} \frac{1}{T}[H(u^T) - H(\xi^T)]\\
&=\lim_{T\to \infty} \frac{1}{2T} \log \frac{|\Sigma_{u^T}|}{|\Sigma_{\xi^T}|} =  \int_{-\frac{1}{2}}^{\frac{1}{2}}\log \frac{|u(e^{j2\pi\theta})|^2}{|\xi(e^{j2\pi\theta})|^2}d\theta\\
&= \int_{-\frac{1}{2}}^{\frac{1}{2}}\log |\mathcal{S}(e^{j2\pi\theta})|d\theta = \sum_{i = 1}^m\log|\lambda_i(A_u)|
\end{aligned}
\end{eqnarray*}
where $\lambda_i(A_u)$ are the unstable poles of $A$.

\end{proof}

Hence, the average directed  information and average information transfer are the same for the above feedback control system. This theorem, along with the fact that the channel capacity of a feedback channel is related to directed information, presents further evidence that our formulation of information transfer, though developed from dynamical systems point of view, is consistent with already existing concepts in information theory and control theory. We feel this will allow us to study the concepts of information theory, control theory and dynamical systems from a more fundamental and common point of view.

\begin{example}
Consider the feedback system given in figure 5 with
\begin{eqnarray*}
\begin{aligned}
A = \left(\begin{array}{cc}
4 & 2 \\ 
0 & 0.7
\end{array} \right),\quad B = \left(\begin{array}{c}
1 \\ 
0
\end{array} \right), \quad C = [\begin{array}{cc}
-3.5 & 0
\end{array} ]
\end{aligned}
\end{eqnarray*}
We also assume that the noise $\omega$ is i.i.d. zero mean unit variance Gaussian white noise.
\begin{figure}[htp!]
\centering
\subfigure[]{\includegraphics[scale=.28]{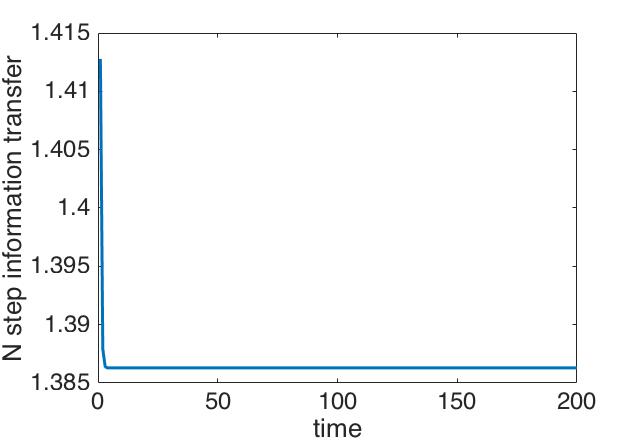}}
\subfigure[]{\includegraphics[scale=.28]{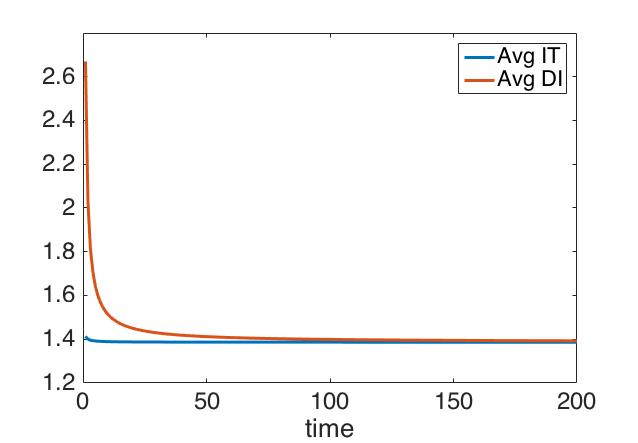}}
\caption{(a) N-step information transfer; (b) Average information transfer and average directed information}\label{avg_IT_avg_DI_feedback}
\end{figure}
The open loop system has unstable pole at $4$. Hence the Bode integral of the sensitivity transfer function is $\log(4)=1.3863$. Figure 6(a) shows the N-step information transfer from the output of the system to the input of the system. The steady state value of the information transfer is $1.3863$. Similarly, the average information transfer from the output to the input is also $1.3863$. This is shown in figure 6(b). In this figure we also plot the average directed information transfer from the output to input. Here again we observe that the average information transfer converges to the Bode integral of the sensitivity transfer function much faster than the average directed information.
\end{example}

\section{Information Transfer and Structural Controllability and Observability}\label{section_control}
Structural controllability was developed by \cite{lin1974structural}, and is a weaker notion of controllability, where a system is said to structurally controllable, if either the system is controllable or is controllable when the non-zero entries of the system matrix are perturbed so that the perturbed system becomes controllable. Associated with a system
\begin{eqnarray}\label{struc_control}
& z(t+1) = Az(t) + \xi (t), \quad z\in\mathbb{R}^N
\end{eqnarray}
is a directed graph $Z=(V_z,E_z)$ where $V_z = \{z_i|i = 1,\hdots ,N \}$ is the node set and $E_z$ is the edge set, such that there is directed edge from $z_i$ to $z_j$ iff ${ji}^{th}$ entry of $A$ is non-zero. Now if a single control input is placed at node $k$, then one can construct an extended graph $G=(V,E)$, with one extra node $u$ and one extra edge from $u$ to $z_k$. With this, the system is said to structurally controllable if all the nodes of the extended graph can be reached from node $u$ \cite{siljak2011decentralized}. Since $u$ is directly connected to $z_k$, one can say that the system, with input at $z_k$, is structurally controllable if there exists paths to all the nodes of $Z$ from $z_k$.

In this section we show that if information transfer from the input to all the nodes is non-zero, then the system is structurally controllable. Note that in this section, by information transfer, we will mean $n$-step information transfer for $n\in\mathbb{Z}_{>0}$.

\begin{theorem}\label{n_step_IT_path}
If the $k$-step information transfer from state $z_i$ to $z_j$ of system (\ref{struc_control}) is non-zero for some $k\in\mathbb{Z}_{>0}$, then there is a directed path from node $z_i$ to $z_j$ in the corresponding directed graph for the system.
\end{theorem}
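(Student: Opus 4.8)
The plan is to establish the contrapositive: if there is \emph{no} directed path from $z_i$ to $z_j$ in the graph associated with system~(\ref{struc_control}), then $(T_{z_i\to z_j})_t^{t+k}=0$ for every $t$ and every $k\in\mathbb{Z}_{>0}$.

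First I would collect the relevant states into the set of \emph{ancestors} of $z_j$,
\[
\mathcal{A}=\{z_j\}\cup\{\,z_\ell:\ \text{there is a directed path from }z_\ell\text{ to }z_j\,\},
\]
and note that $\mathcal{A}$ is closed under taking predecessors in the graph: if $A_{\ell m}\neq 0$ for some $z_\ell\in\mathcal{A}$ then the graph contains the edge $z_m\to z_\ell$, and prepending it to a path from $z_\ell$ to $z_j$ gives a path from $z_m$ to $z_j$, so $z_m\in\mathcal{A}$. Reordering the coordinates so that the states of $\mathcal{A}$ come first, this closure property forces $A$ to be block lower triangular, with a zero block coupling $\mathcal{A}^c$ into $\mathcal{A}$; hence the $\mathcal{A}$-block evolves autonomously, $z_{\mathcal{A}}(s+1)=A_{\mathcal{A}}\,z_{\mathcal{A}}(s)+\xi_{\mathcal{A}}(s)$, never referencing any coordinate outside $\mathcal{A}$. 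By hypothesis $z_i\notin\mathcal{A}$.

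The key step is then that the whole segment $\bigl(z_j(t),z_j(t+1),\dots,z_j(t+k)\bigr)$, because $z_j\in\mathcal{A}$, is a fixed measurable function of the initial block $z_{\mathcal{A}}(t)$ and of the noises $\{\,\xi_{\mathcal{A}}(s):t\le s<t+k\,\}$, \emph{none of which involves $z_i$}. Freezing $z_i$ over the window from $t$ to $t+k$, as in the $n$-step transfer formula~(\ref{n_stepIT}), alters only terms through which $z_i$ acts on the other states, and there are none in the $\mathcal{A}$-block; moreover the law of $z_{\mathcal{A}}(t)$ is itself untouched, because the freezing is applied only from time $t$ onward and the future noise is independent of the past. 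Consequently the joint density $\rho(z_j(t+k)\cdots z_j(t))$ equals its frozen counterpart $\rho_{\not{z}_i}(z_j(t+k)\cdots z_j(t))$, and likewise the conditional densities $\rho(z_j(t+k)\,|\,z_j(t+k-1)\cdots z_j(t))$ and $\rho_{\not{z}_i}(z_j(t+k)\,|\,z_j(t+k-1)\cdots z_j(t))$ coincide. Substituting into~(\ref{n_stepIT}), the two entropies cancel and $(T_{z_i\to z_j})_t^{t+k}=0$, which is the contrapositive of the claim.

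I expect the only real content to be bookkeeping: making the ``closed under predecessors'' step precise, and --- the genuinely delicate point --- justifying that each density appearing in~(\ref{n_stepIT}) for the full $N$-dimensional system may be computed on the autonomous $\mathcal{A}$-subsystem instead, both with and without freezing $z_i$. A less combinatorial alternative would be to apply the earlier nonlinear zero-transfer theorem (the statement that $T_{z^i\to z^j}=0$ whenever $T_j$ is independent of $z^i$) to the $k$-fold composition of the dynamics on a suitably lifted state; but verifying its hypothesis for the iterated map reduces to exactly the same reachability argument, so the graph-theoretic route above looks cleanest.
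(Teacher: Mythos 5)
Your proposal is correct, and it follows the same overall strategy as the paper: argue the contrapositive, and show that when there is no directed path from $z_i$ to $z_j$, freezing $z_i$ leaves the law of the trajectory $(z_j(t),\dots,z_j(t+k))$ unchanged, so the two conditional entropies in (\ref{n_stepIT}) cancel. The difference is in how the key invariance is established. The paper works entrywise: writing $z(k)=A^k z(0)+A^{k-1}\xi(0)+\cdots+\xi(k-1)$, it proves by induction that $[A^k]_{1j}=[A_{\not{2}}^k]_{1j}$ for all $k$ (using $[A^l]_{12}=0$, which itself comes from the no-path hypothesis), and concludes $z_1(k)=z_{1\not{2}}(k)$ pathwise. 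You instead exploit the graph structure directly: the ancestor set $\mathcal{A}$ of $z_j$ is closed under predecessors, so after reordering $A$ is block lower triangular and the $\mathcal{A}$-subsystem is autonomous and free of $z_i$; freezing $z_i$ therefore cannot alter it. The two arguments encode the same combinatorial fact (every term of $[A^k]_{j i}$, and every cross term through node $i$ into the ancestor block, vanishes), but your packaging is more structural, dispenses with the induction, and makes the "for all $k$ and all $t$" claim immediate, whereas the paper's version is more explicit about exactly which entries of the frozen matrix enter the formula. The point you flag as delicate --- that the frozen densities $\rho_{\not{z}_i}$ are computed from the reduced system with the same noise and the same time-$t$ law of the remaining coordinates --- is treated at the same (informal) level in the paper, which simply replaces $A$ by $A_{\not{2}}$ in (\ref{z_1_dynamics_z_2_frozen}); so no gap beyond what the paper itself assumes.
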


\begin{proof}
We prove this by contrapositive argument. Without loss of generality, we assume $i=2$ and $j=1$. Suppose there is no path from $z_2$ to $z_1$. Hence, $[A^k]_{12} = 0$ for all $k\in\mathbb{Z}$ \footnote{Here  $[A^k]_{ij}$ means the $ij^{th}$ entry of $A^k$ }. We show that in this case the $k$-step information transfer from $z_2$ to $z_1$ is zero for all $k\in\mathbb{Z}$. The evolution of the states can be written as
\begin{eqnarray}
z(k) = A^kz(0) + A^{k-1}\xi (0) + \hdots + \xi (k-1)
\end{eqnarray}

Hence,
\begin{eqnarray}\label{z_1_dynamics}\nonumber
z_1(k) &=& \sum_{j=1, j\neq 2}^N [A^k]_{(1j)}z_j(0) + \sum_{j=1, j\neq 2}^N [A^{k-1}]_{(1j)}\xi_j(0)\\
&& + \hdots + \xi_1 (k)
\end{eqnarray}
since $[A^k]_{12}=0$ and when $z_2$ is held frozen, we have
\begin{eqnarray}\nonumber\label{z_1_dynamics_z_2_frozen}
z_{1\not{2}}(k) &=& \sum_{j=1, j\neq 2}^N [A_{\not{2}}^k]_{(1j)}z_j(0) + \sum_{j=1, j\neq 2}^N [A_{\not{2}}^{k-1}]_{(1j)}\xi_j(0)\\
&& + \hdots + \xi_1 (k)
\end{eqnarray}
where $A_{\not{2}}$ is the system matrix when $z_2$ is held frozen, that is $A_{\not{2}}$ is obtained from $A$ by deleting the second row and second column of $A$.

Now,
\begin{eqnarray}\nonumber
&&[A^2]_{(1j)} = \sum_{i=1}^Na_{1i}a_{ij} = a_{12}a_{2j} + \sum_{i\neq 2}a_{1i}a_{ij} \\
&&= \sum_{i\neq 2}a_{1i}a_{ij}=[A_{\not{2}}^2]_{1j}
\end{eqnarray}
since $a_{12} = 0$.

Similarly,
\begin{eqnarray*}
&&[A^3]_{1j} = [A^2A]_{1j} = [A_2A]_{1j} = \sum_i (a_2)_{1i}a_{ij} =\\
&& (a_2)_{12}a_{2j} + \sum_{i\neq 2}(a_2)_{1i}a_{ij} = \sum_{i\neq 2}(a_2)_{1i}a_{ij}=[A_{\not{2}}^3]_{1j}
\end{eqnarray*}
since $[A^2]_{12} = (a_2)_{12} = 0$.
Let $[A^l]_{1j} = [A_{\not{2}}^l]_{1j}$ for some $l\in\mathbb{Z}_{>0}$. Then
\begin{eqnarray*}
&&[A^{l+1}]_{1j} = [A^lA]_{1j} = [A_lA]_{1j} = \sum_i (a_l)_{1i}a_{ij} =\\
&& (a_l)_{12}a_{2j} + \sum_{i\neq 2}(a_l)_{1i}a_{ij} = \sum_{i\neq 2}(a_l)_{1i}a_{ij}=[A_{\not{2}}^{l+1}]_{1j}
\end{eqnarray*}
Hence, $[A^{k}]_{1j}=[A_{\not{2}}^{k}]_{1j}$ for all $k\in\mathbb{Z}$. Hence from (\ref{z_1_dynamics}) and (\ref{z_1_dynamics_z_2_frozen}), $z_1(k) = z_{1\not{2}}(k)$.
So,
\begin{eqnarray*}
&&H(z_1(k)|z_1(k-1)\hdots z_1(0))\\
&&= H_{\not{z}_2}(z_1(k)|z_1(k-1)\hdots z_1(0)).
\end{eqnarray*}
Hence $[T_{z_2\to z_1}]_0^k=0$ for all $k\in\mathbb{Z}$. Hence, if there is non zero information transfer from $z_2$ to $z_1$, then there exists a directed path from $z_2$ to $z_1$.
\end{proof}

\begin{theorem}\label{structral_controllability}
If the information transfer from the input to all the states are non-zero, then the system is structurally controllable.
\end{theorem}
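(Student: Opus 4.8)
The plan is to reduce the statement to the graph-theoretic characterization of structural controllability recalled immediately before the theorem: the system with a single input at node $z_k$ is structurally controllable if and only if every node of the extended graph $G$ is reachable by a directed path from the input node $u$. So it suffices to show that the hypothesis --- non-zero information transfer from the input to every state --- forces a directed path from $u$ to each $z_j$ in $G$.

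First I would establish the input-to-state analogue of Theorem \ref{n_step_IT_path}: if the $n$-step information transfer $T_{u\to z_j}$ is non-zero for some $n$, then there is a directed path from $u$ to $z_j$ in the extended graph. I would prove this by contrapositive, mirroring the proof of Theorem \ref{n_step_IT_path} but now treating the input $u$ as the frozen signal. Since $u$ is attached to the network only through the edge $u\to z_k$, having no path from $u$ to $z_j$ is equivalent to having no path from $z_k$ to $z_j$ in the graph $Z$, i.e. $[A^m]_{jk}=0$ for all $m\ge 0$. Writing the evolution as $z(k)=A^k z(0)+\sum_{m=0}^{k-1}A^{k-1-m}\big(e_k u(m)+\xi(m)\big)$, every occurrence of the input in $z_j(0),\dots,z_j(k)$ is weighted by one of the vanishing entries $[A^{k-1-m}]_{jk}$; hence the whole trajectory $z_j(0),\dots,z_j(k)$ is literally independent of $u$, so freezing $u$ leaves the conditional density $\rho\big(z_j(k)\mid z_j(k-1)\cdots z_j(0)\big)$ unchanged. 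Therefore the two conditional entropies coincide and $[T_{u\to z_j}]_0^k=0$ for every $k$, which is the desired contrapositive. (Note this is actually simpler than the proof of Theorem \ref{n_step_IT_path}, because $u$ is not a row/column of $A$, so no $[A^l]_{\cdot j}=[A_{\not{\cdot}}^l]_{\cdot j}$ bookkeeping is needed --- the relevant coefficients are zero outright.)

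With this lemma in hand the theorem is immediate: by hypothesis $T_{u\to z_j}\neq 0$ for every state $z_j$, so there is a directed path from $u$ to $z_j$ for each $j$; thus every node of $G$ is reachable from $u$, and by the criterion of Lin \cite{lin1974structural} (stated as in \cite{siljak2011decentralized}) the system is structurally controllable.

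The main obstacle I anticipate is making the first step fully rigorous. One must verify carefully that "the input enters $z_j$'s trajectory only through coefficients $[A^m]_{jk}$", which relies on the special input structure $B=e_k$, and one must argue the equality of conditional entropies for the \emph{joint} history $z_j(0),\dots,z_j(k)$ rather than just the single terminal time $z_j(k)$ --- otherwise the argument does not cover the $n$-step transfer defined in \eqref{n_stepIT}. Everything after that is a direct appeal to the reachability criterion.
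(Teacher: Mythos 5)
Your proposal is correct and takes essentially the same route as the paper: reduce structural controllability to input-reachability of the extended graph, and use the fact that non-zero ($n$-step) information transfer forces a directed path. The only cosmetic difference is that the paper obtains the input-to-state path lemma by treating the input node as an extra state and invoking Theorem \ref{n_step_IT_path} directly, whereas you re-derive it from the vanishing coefficients $[A^m]_{jk}$; both amount to the same argument.
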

\begin{proof}
We consider the extended graph associated with the dynamical system and consider the input node as an extra state. From theorem (\ref{n_step_IT_path}), if the information transfer from the input to any state $z_i$ is non-zero, then there exists a path from the input node to the node $z_i$ in the directed graph associated with the system. So, if the information transfer from the input to all the states is non zero, then there exists directed paths from the input node to all the other nodes in the directed graph associated with the system and hence the system is input reachable and hence is structurally controllable. 
\end{proof}

Results for structural observability follows from duality and can be stated as
\begin{theorem}\label{structural_observability}
If the information transfer from all the states to the output is non zero, then the system is structurally observable.
\end{theorem}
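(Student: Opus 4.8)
The plan is to mirror the two-step argument used for structural controllability (Theorems~\ref{n_step_IT_path} and~\ref{structral_controllability}), transported through the standard duality that identifies structural observability of $(A,C)$ with structural controllability of $(A^\top,C^\top)$. Concretely, I would first establish the output-analogue of Theorem~\ref{n_step_IT_path}: \emph{if the $k$-step information transfer from a state $z_i$ to the output $\theta$ is nonzero for some $k\in\mathbb{Z}_{>0}$, then in the directed graph $Z$ augmented with an output node $\theta$ and an edge from the measured node to $\theta$, there is a directed path from $z_i$ to $\theta$.} Granting this, Theorem~\ref{structural_observability} is immediate: if the transfer from every state to the output is nonzero, then every state node reaches $\theta$ in the augmented graph, which is exactly the graph-theoretic criterion for structural observability (the dual of input-reachability), and one concludes by the same reasoning as in the proof of Theorem~\ref{structral_controllability}.

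For the dual lemma I would argue by contrapositive, exactly as in Theorem~\ref{n_step_IT_path}. Assume there is no directed path from $z_i$ to $\theta$; since (in the SISO case) $C$ has a single nonzero entry at the measured node $m$, this means there is no path from $z_i$ to $z_m$, hence $m\neq i$ and $[A^\ell]_{mi}=0$ for all $\ell\geq 0$. Expanding the output, $\theta(t)=CA^t z(0)+\sum_{s=0}^{t-1}CA^{t-1-s}\xi(s)+\omega(t)$, the coefficient of $z_i(0)$ and of each noise component $\xi_i(s)$ is $(CA^\ell)_i=C_m[A^\ell]_{mi}=0$, so the output trajectory $(\theta(t),\dots,\theta(t+k))$ does not depend on $z_i$ to begin with. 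Freezing $z_i$ collapses the dynamics to the matrices $A_{\not i},C_{\not i}$ obtained by deleting the $i$-th row and column, and an induction identical to the one in Theorem~\ref{n_step_IT_path}---using $[A^\ell]_{mi}=0$, in particular $a_{mi}=0$---gives $[A^\ell]_{mj}=[A_{\not i}^\ell]_{mj}$ for all $j\neq i$ and all $\ell\geq 0$. Hence the frozen output coincides with the original output as a random vector, so $\rho(\theta(t+k)\mid\theta(t+k-1)\cdots\theta(t))$ and $\rho_{\not z_i}(\theta(t+k)\mid\theta(t+k-1)\cdots\theta(t))$ are equal, their entropies agree, and $(T_{z_i\to\theta})_t^{t+k}=0$, proving the lemma.

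I expect the only real work to be this lemma, and within it the bookkeeping of \emph{what freezing does to the output map}: one must verify that deleting row and column $i$ of $A$ (and column $i$ of $C$) leaves the dependence of $\theta$ on the initial condition and the noise unchanged whenever $z_i$ cannot reach the measurement. This is the exact dual of the identity $[A^k]_{1j}=[A_{\not 2}^k]_{1j}$ established in the proof of Theorem~\ref{n_step_IT_path}, so no genuinely new estimate is needed; the step is routine once the correct indices are tracked. Two minor points worth recording explicitly are the MIMO generalization---replace the single measured node $m$ by $\mathrm{supp}(C)$ and use $(CA^\ell)_i=\sum_{m\in\mathrm{supp}(C)}C_m[A^\ell]_{mi}$---and the remark that structural observability of $(A,C)$ is by definition structural controllability of $(A^\top,C^\top)$, which legitimizes the augmented-graph reachability condition invoked in the first paragraph.
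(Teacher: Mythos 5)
Your proposal is correct and takes essentially the same route as the paper, which settles this theorem in a single line by appealing to duality with Theorem~\ref{structral_controllability}; your explicit output-analogue of Theorem~\ref{n_step_IT_path} (no path from $z_i$ to the measured node forces $[A^\ell]_{mi}=0$ for all $\ell$, so the frozen and unfrozen output trajectories coincide and the $k$-step transfer vanishes) just spells out the details that the paper leaves implicit in the duality remark. Note that, like the paper, you are using input/output reachability as the criterion for structural controllability/observability, so your argument matches the paper's own standard of rigor rather than the full Lin-type characterization.
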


Here we have proved the result for structural controllability for SISO case, but the result for MIMO case is similar.

\section{Conclusion}\label{section_conclusion}
In this paper, we discussed how directed information fails to capture the causal structure in a dynamical system and provided a new measure of causality based on a new definition of information transfer and have shown that it captures the intuitions of causality. The new definition of information, which is based on dynamical system setting, has been generalized to define information transfer between the various signals in a control dynamical system. This generalization resulted in connecting the information transfer with Bode integral of the sensitivity transfer function from the output to the input in a feedback control system.

\bibliographystyle{IEEEtran}
\bibliography{ref1}

\end{document}